\newcommand{\R}{\mathbb{R}}
\newcommand{\N}{\mathbb{N}}
\newcommand{\B}{\mathcal{B}}    %Borel 
\newcommand{\K}{\mathcal{K}}	%Convex bodies
\newcommand{\M}{\mathcal{M}}	%Sets of measures
\newcommand{\Ms}{M_{s_o}}
\newcommand{\Prob}{\mathbb{P}}  %Probability measure
\newcommand{\F}{\mathcal{F}}
\newcommand{\1}{\operatorname{\mathbf{1}}}	%Indicator function
\newcommand{\Haus}{\mathcal{H}}    %Hausdorff measure
\DeclareMathOperator{\supp}{supp}			%Support
\DeclareMathOperator{\aff}{aff}				%Affine hull
\newcommand{\IP}[2]{\langle #1, #2 \rangle}				%Inner product in R^n.
\newcommand{\nto}[1]{\norm[#1]_2}						%Norm in L^2
\newcommand{\IPto}[2]{\langle #1, #2 \rangle_2}			%Inner product in L^2.
\newcommand{\BL}{BL(S^{n-1})} 				%Lipshitz and essential bounded functions
\newcommand{\dudley}[2]{d_D(#1, #2)}		%Dudley metric
\newcommand{\mK}{\hat{K}_{s_o}}    					%Estimator based on surface tensors
\newcommand{\Ks}{\mathbb{K}_{s_o}(\epsilon_{s_o})}  %Random set of solutions to minimization problem
\newtheorem{proposition}{Proposition}[section]
\newtheorem{theorem}[proposition]{Theorem}
\newtheorem{corollary}[proposition]{Corollary}
\newtheorem{lemma}[proposition]{Lemma}
\theoremstyle{definition}
\newtheorem{remark}[proposition]{Remark}
\renewenvironment{abstract}{%
  \small%
  \providecommand\keywords{%
    \par\medskip\noindent\textit{Keywords:}\xspace}%
  \begin{center}%
    \bfseries \abstractname\vspace{-.5em}\vspace{\z@}%
  \end{center}% 
  \quote%
}{\endquote}
\title{Reconstruction of $n$-dimensional convex bodies from surface tensors}
\author{Astrid Kousholt\footnote{kousholt@math.au.dk}}
\date{}
\affil{Department of Mathematics, Aarhus University, Denmark}
\begin{document}
\maketitle

\vspace*{-2em}

\begin{abstract}
In this paper, we derive uniqueness and stability results for surface tensors. Further, we develop two algorithms that reconstruct shape of $n$-dimensional convex bodies. One algorithm requires knowledge of a finite number of surface tensors, whereas the other algorithm is based on noisy measurements of a finite number of harmonic intrinsic volumes. The derived stability results ensure consistency of the two algorithms. Examples that illustrate the feasibility of the algorithms are presented. 

\keywords Convex body, surface tensor, harmonic intrinsic volume, uniqueness, stability, reconstruction algorithm 
\\
\\
\textit{MSC2010:} 52A20, 44A60, 60D05

\end{abstract}

\section{Introduction}
Recently, Minkowski tensors have succesfully been used as shape descriptors of spatial structures in materials science, see, e.g., \cite{Beisbart2002,SM10, Schroder-Turk2013}. Surface tensors are translation invariant Minkowski tensors derived from surface area measures, and the shape of a convex body $K$ with nonempty interior in $\R^n$ is uniquely determined by the surface tensors of $K$. In this context, the shape of $K$ is defined as the equivalence class of all translations of $K$. 

In \cite{Kousholt2016}, Kousholt and Kiderlen develop reconstruction algorithms that approximate the shape of convex bodies in $\R^2$ from a finite number of surface tensors. Kousholt and Kiderlen describe two algorithms. One algorithm requires knowledge of exact surface tensors and one allows for noisy measurements of surface tensors. For the latter algorithm, it is argued that it is preferable to use harmonic intrinsic volumes instead of surface tensors evaluated at the standard basis. The purpose of this paper is threefold. Firstly, the reconstruction algorithms in \cite{Kousholt2016} are generalized to an $n$-dimensional setting. Secondly, stability and uniqueness results for surface tensors are established, and the stability results are used to ensure consistency of the generalized algorithms. Thirdly, we illustrate the feasibility of the reconstruction algorithms by examples. The generalizations of the reconstruction algorithms are developed along the same lines as the algorithms for convex bodies in $\R^2$. However, there are several non-trivial obstacles on the way. In particular, essentially different stability results are needed to ensure consistency.

The input of the first generalized algorithm is exact surface tensors up to a certain rank of an unknown convex body in $\R^n$. The output is a polytope with surface tensors identical to the given surface tensors of the unknown convex body. The input of the second generalized algorithm is measurements of harmonic intrinsic volumes of an unknown convex body in $\R^n$, and the output is a polytope with harmonic intrinsic volumes that fit the given measurements in a least squares sense. When $n \geq 3$, a convex body that fits the input measurements of harmonic intrinsic volumes may not exist, and in this case, the algorithm based on harmonic intrinsic volumes does not have an output. However, this situation only occurs when the measurements are too noisy, see Lemma~\ref{Lemma_D_bound}.

The consistency of the algorithms described in \cite{Kousholt2016} is established using the stability result \cite[Thm. 4.8]{Kousholt2016} for harmonic intrinsic volumes derived from the first order area measure. This result can be applied as the first order area measure and the surface area measure coincide for $n=2$. However, for $n \geq 3$, the stability result is not applicable. Therefore, we establish stability results for surface tensors and for harmonic intrinsic volumes derived from surface area measures. More precisely, first we derive an upper bound of the Dudley distance between surface area measures of two convex bodies. This bound is small, when $s$ is large and the distance between the harmonic intrinsic volumes up to degree $s$ of the convex bodies is small (Theorem~\ref{Thm_noise}). From this result and a known connection between the Dudley distance and the translative Hausdorff distance, we obtain that the translative Hausdorff distance between convex bodies with identical surface tensors up to rank $s$ becomes small, when $s$ is large (Corollary~\ref{Cor_bound_dudley}). The stability result for surface tensors and the fact that the rank $2$ surface tensor of a convex body $K$ determines the radii of a ball containing $K$ and a ball contained in $K$ (Lemma~\ref{spheres}) ensure consistency of the generalized reconstruction algorithm based on exact surface tensors (Theorem~\ref{thm_conv}). The consistency of the reconstruction algorithm based on measurements of harmonic intrinsic volumes are ensured by the stability result for harmonic intrinsic volumes under certain assumptions on the variance of the noise variables (Theorems~\ref{thm_cons_as} and \ref{thm_cons_prob}). 

The described algorithms and stability results show that a finite number of surface tensors can be used to approximate the shape of a convex body, but in general, all surface tensors are required to uniquely determine the shape of a convex body. However, there are convex bodies where a finite number of surface tensors contain full information about the shapes of the convex bodies. More precisely, in \cite{Kousholt2016}, it is shown that the shape of a convex body in $\R^n$ with nonempty interior is uniquely determined by a finite number of surface tensors only if the convex body is a polytope. We complement this result by showing that the shape of a polytope with $m$ facets is uniquely determined by the surface tensors up to rank $m - n +2$. This result is optimal in the sense that for each $m \geq n+1$ there is a polytope $P$ with $m$ facets and a convex body $K$ that is not a polytope, such that $P$ and $K$ have identical surface tensors up to rank $m-n+1$. This implies that the rank $m-n+2$ cannot be reduced. An earlier and weaker result in this direction is \cite[Thm. 4.3]{Kousholt2016} stating that the shape of a polytope with $m$ facets is determined by the surface tensors up to rank $2m$.

The paper is organized as follows. General notation, surface tensors and harmonic intrinsic volumes are introduced in Section~\ref{Sec_prelim}. The uniqueness results are derived in Section~\ref{Sec_Uniqueness} and are followed by the stability results in Section~\ref{Sec_stab}. The two reconstruction algorithms are described in Sections~\ref{Sec_Recon} and \ref{Sec_ReconHVol}.

\section{Notation and preliminaries}\label{Sec_prelim}
We work in the $n$-dimensional Euclidean vector space $\R^n$, $n \geq 2$ with standard inner product $\IP{\cdot}{\cdot}$ and induced norm $\norm[\cdot]$. The unit sphere in $\R^n$ is denoted $S^{n-1}$, and the surface area and volume of the unit ball $B^n$ in $\R^n$ is denoted $\omega_n$ and $\kappa_n$, respectively.

In the following, we give a brief introduction to the concepts of convex bodies, surface area measures, surface tensors and harmonic intrinsic volumes. For further details, we refer to \cite{Schneider14} and \cite{Kousholt2016}. We let $\K^n$ denote the set of convex bodies (convex, compact and nonempty sets) in $\R^n$, and let $\K^n_n$ be the set of convex bodies with nonempty interior. Further, $\K^n(R)$ is the set of convex bodies contained in a ball of radius $R> 0$, and likewise, $\K^n(r,R)$ is the set of convex bodies that contain a ball of radius $r > 0$ and are contained in a concentric ball of radius $R > r$. The set of convex bodies $\K^n$ is equipped with the Hausdorff metric $\delta$. The Hausdorff distance between two convex bodies can be expressed as the supremum norm of the difference of the support functions of the convex bodies, i.e. 
\begin{equation*}
\delta(K,L) = \nsup[h_K - h_L] = \sup_{u \in S^{n-1}}\abs[h_K(u) - h_L(u)]
\end{equation*}
for $K, L \in \K^n$.

In the present work, we call the equivalence class of translations of a convex body $K$ the shape of $K$. Hence, two convex bodies are of the same shape exactly if they are translates. As a measure of distance in shape, we use the translative Hausdorff distance,
\begin{equation*}
\delta^t(K,L)=\inf_{x \in \R^n} \delta(K, L + x)
\end{equation*}
for $K, L \in \K^n$. The translative Hausdorff distance is a metric on the set of shapes of convex bodies, see \cite[p. 165]{Gardner06}.

For a convex body $K \in \K^n_n$, the surface area measure $S_{n-1}(K, \cdot)$ of $K$ is defined as
$$S_{n-1}(K, \omega) = \Haus^{n-1}(\tau(K, \omega))$$
for a Borel set $\omega \subseteq S^{n-1}$, where $\Haus^{n-1}$ is the $(n-1)$-dimensional Hausdorff measure, and $\tau(K, \omega)$ is the set of boundary points of $K$ with an outer normal belonging to $\omega$. For a convex body $K \in \K^n \setminus \K^n_n$ there is a unit vector $u \in S^{n-1}$ and an $x \in \R^n$, such that $K$ is contained in the hyperplane $u^{\perp} + x$. The surface area measure of $K$ is defined as $$S_{n-1}(K, \cdot)=S(K)(\delta_u + \delta_{-u}),$$ where $S(K)$ is the surface area of $K$ and $\delta_v$ is the Dirac measure at $v \in S^{n-1}$. Notice that $S(K)=S_{n-1}(K, S^{n-1})$ for $K \in \K^n_n$, and $2S(K)=S^{n-1}(K, S^{n-1})$ for $K \in \K^n \setminus \K^n_n$.

The surface tensors of $K \in \K^n$ are the Minkowski tensors of $K$ derived from the surface area measure of $K$. Hence for $s \in \N_0$, \emph{the surface tensor} of $K$ of rank $s$ is given as
\begin{equation*}
\T[s]= \frac{1}{s! \, \omega_{s+1}}\int_{S^{n-1}} u^s \, S_{n-1}(K, du)
\end{equation*}
where $u^s \colon (\R^n)^s \to \R$ is the $s$-fold symmetric tensor product of $u \in S^{n-1}$ when $u$ is identified with the rank $1$ tensor $v \mapsto \IP{u}{v}$. Due to multilinearity, the surface tensor of rank $s$ can be identified with the array $\{\T[s](e_{i_1}, \dots, e_{i_s})\}_{i_1, \dots, i_s =1}^n$  of components of $\T[s]$, where $(e_1, \dots, e_n)$ is the standard basis of $\R^n$. Notice that the the components of $\T[s]$ are scaled versions of the moments of $S_{n-1}(K, \cdot)$, where the moments of order $s\in \N_0$ of a Borel measure $\mu$ on $S^{n-1}$ are given by
\begin{equation*}
\int_{S^{n-1}} u_1^{i_1} \cdots u_n^{i_n} \ \mu(du)
\end{equation*}
for $i_1, \dots, i_n \in \{0, \dots, s\}$ with $\sum_{j=1}^n i_j = s$.

By \cite[Remark 3.1]{Kousholt2016}, the surface tensors $\T[0], \dots, \T[s]$ of $K$ are uniquely determined by $\T[s-1]$ and $\T[s]$ for $s \geq 2$. More precisely, if $0 \leq s \leq s_o$ has same parity as $s_o$, say, then $\Phi_{n-1}^s$ can be calculated from $\Phi_{n-1}^{s_o}$ by taking the trace consecutively and multiplying with the constant
\begin{equation}\label{Constant_c}
c_{s,s_o}=\frac{s_o! \, \omega_{s_o}}{s! \, \omega_{s+1}}.
\end{equation}        
We let 
\begin{equation*}
m_s = \binom{s + n -2}{n-1} + \binom{s+n-1}{n-1}
\end{equation*}
be the number of \textit{different} components of $\T[s-1]$ and $\T[s]$, and we use the notation $\phi_{n-1}^s(K)$ for the $m_s$-dimensional vector of different components of the surface tensors of $K$ of rank $s-1$ and $s$.

To a convex body $K \in \K^n$, we further associate the harmonic intrinsic volumes that are the moments of $S_{n-1}(K, \cdot)$ with respect to an orthonormal sequence of spherical harmonics (for details on spherical harmonics, see \cite{Groemer1996}). More precisely, for $k \in \N_0$, let $\sH$ be the vector space of spherical harmonics of degree $k$ on $S^{n-1}$. The dimension of $\sH$ is denoted $N(n,k)$, and $\sum_{k=0}^s N(n,k) = m_s$. We let $H_{nk1}, \dots, H_{nkN(n,k)}$ be an orthonormal basis of $\sH$. Then, the \emph{harmonic intrinsic volumes} of $K$ of degree $k$ are given by
\begin{equation*}
\psi_{(n-1)kj}(K)=\int_{S^{n-1}} H_{nkj}(u) \, S_{n-1}(K, du)
\end{equation*}
for $j=1, \dots, N(n,k)$. For a convex body $K \in \K^n$, we let $\HV[s](K)$ be the $m_s$-dimensional vector of harmonic intrinsic volumes of $K$ up to degree $s$. The vector $\HV[s](K)$ only depends on $K$ through the surface area measure $S_{n-1}(K, \cdot)$ of $K$, and we can write $\HV[s](S_{n-1}(K, \cdot))=\HV[s](K)$. Likewise, for an arbitrary Borel measure $\mu$ on $S^{n-1}$, we write $\HV[s](\mu)$ for the vector of harmonic intrinsic volumes of $\mu$ up to order $s$, that is the vector of moments of $\mu$ up to order $s$ with respect to the given orthonormal basis of spherical harmonics. The harmonic intrinsic volumes and the surface tensors of a convex body $K$ are closely related as there is an invertible linear mapping $f \colon \R^{m_s} \to \R^{m_s}$ such that $f(\phi_{n-1}^s(K))=\HV[s](K)$.

\section{Uniqueness results}\label{Sec_Uniqueness}

The shape of a convex body is uniquely determined by a finite number of surface tensors only if the convex body is a polytope, see \cite[Cor. 4.2]{Kousholt2016}. Further, in \cite[Thm. 4.3]{Kousholt2016} it is shown that a polytope in $\R^n$ with nonempty interior and $m \geq n+1$ facets is uniquely determined up to translation in $\K^n$ by its surface tensors up to rank $2m$. In Theorem~\ref{Thm_uniqueness}, we replace $2m$ with $m -n +2$, and in addition, we show that the rank $m -n +2$ cannot be reduced.

We let $\M$ denote the cone of finite Borel measures on $S^{n-1}$. Further, we let $\Poly[m]$ be the set of convex polytopes in $\R^n$ with at most $m \geq n+1$ facets. The proof of Lemma~\ref{lemma_uniqueness} is an improved version of the proof of \cite[Thm. 4.3]{Kousholt2016}. 

%%%%%%%%%%Det nye lemma
\begin{lemma} \label{lemma_uniqueness}
Let $m \in \N$ and $\mu \in \M$ have finite support $\{u_1, \dots, u_m\} \subseteq S^{n-1}$. 
\begin{enumerate}[(i)]
\item\label{item01} The measure $\mu$ is uniquely determined in $\M$ by its moments up to order $m$.
\item\label{item02} If the affine hull $\aff \{u_1, \dots, u_m\}$ of $\supp \mu$ is $\R^n$, then $\mu$ is uniquely determined in $\M$ by its moments up to order $m-n+2$. 
\end{enumerate}
\end{lemma}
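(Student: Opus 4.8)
The plan is to reduce both statements to a dimension count for polynomials restricted to the finite support. Suppose $\mu, \nu \in \M$ have the same moments up to order $k$, where $\supp\mu = \{u_1,\dots,u_m\}$; I want to show $\mu = \nu$ under the appropriate hypothesis on $k$. The first observation is that $\nu$ must also be supported on $\{u_1,\dots,u_m\}$: if not, pick $u_0 \in \supp\nu \setminus \supp\mu$ and construct a nonnegative function agreeing with a suitable polynomial of low degree that is positive at $u_0$ and vanishes at $u_1,\dots,u_m$; integrating against $\mu$ and $\nu$ gives a contradiction with the matching moments, provided the degree needed is $\le k$. So write $\mu = \sum_{i=1}^m a_i \delta_{u_i}$ and $\nu = \sum_{i=1}^m b_i \delta_{u_i}$ with $a_i \ge 0$; it then suffices to find, for each fixed $j$, a polynomial $p_j$ of degree $\le k$ with $p_j(u_i) = \delta_{ij}$ on the support, since then $a_j = \int p_j\,d\mu = \int p_j\,d\nu = b_j$.

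For part \eqref{item01}, this is immediate: the $m$ points $u_1,\dots,u_m$ are distinct, so by the usual Vandermonde/interpolation argument one can separate any one point from the remaining $m-1$ using a product of $m-1$ affine (degree one) factors, giving $\deg p_j \le m-1 \le m$. (One must take a little care that the linear functionals separating the points can be chosen so the product does not accidentally vanish, but on $S^{n-1}$ distinct points are separated by hyperplanes, so this is routine.) This also handles the support-identification step above with room to spare.

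For part \eqref{item02}, the hypothesis $\aff\{u_1,\dots,u_m\} = \R^n$ is what lets us save $n-2$ in the degree. The idea is that to isolate $u_j$ we no longer need $m-1$ separating hyperplanes: we can use the affine span to our advantage. Concretely, since the points affinely span $\R^n$, after relabeling we may assume $u_j, u_{j_1}, \dots, u_{j_n}$ are affinely independent for some choice of $n$ of the other points; the remaining $m-1-n$ points can each be cut off by one hyperplane through $u_j$, and the "simplex" $u_j, u_{j_1},\dots,u_{j_n}$ configuration lets us dispose of those $n$ points more efficiently — one expects to separate $u_j$ from $n$ affinely independent points using a single quadratic (e.g. a sphere) or a clever product, rather than $n$ linear factors. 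Counting: $(m-1-n)$ linear factors plus a degree-two factor that vanishes at the $n$ affinely independent neighbors and is nonzero at $u_j$ gives total degree $m-1-n+2 = m-n+1$; one then needs one more degree of slack to also handle the case where $\nu$ might a priori have extra support, yielding the bound $m-n+2$.

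The main obstacle I anticipate is the construction in part \eqref{item02} of the low-degree polynomial that kills $n$ affinely independent points while staying nonzero at $u_j$: a naive product of linear factors costs $n$ in degree and loses the gain, so one must genuinely exploit that these points lie on the sphere $S^{n-1}$ (a degree-two variety) — for instance by choosing an affine hyperplane $H$ through those $n$ points and noting $H \cap S^{n-1}$ is a sphere of one lower dimension, then iterating, or by directly writing down a quadric through them. Verifying that the resulting polynomial is nonnegative where needed (or replacing it by its square, which would again double the degree and must be avoided) and that it does not accidentally vanish at $u_j$ is the delicate point, and getting the arithmetic to land exactly at $m-n+2$ rather than something larger is where the argument has to be tight. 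Everything else — the support identification, the interpolation bookkeeping, and the passage from "moments" to "surface tensors" — is routine.
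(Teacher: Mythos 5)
Your overall two--stage strategy --- first force $\supp \nu \subseteq \{u_1,\dots,u_m\}$ by integrating a nonnegative polynomial of the critical degree that vanishes on the support, then match the weights with interpolation polynomials --- is exactly the structure of the paper's proof, and your second stage is fine: once both measures are known to be concentrated on the $u_i$, nonnegativity is irrelevant, and a polynomial of degree $m-n$ (the affine form of the hyperplane through $n$ affinely independent companions of $u_j$, times factors $1-\langle u,u_i\rangle$ for the remaining points) already isolates $u_j$; your extra quadratic is not even needed there. The genuine gap is in the first stage for part (ii), and it is the very point you flag as ``the main obstacle'': you never construct a nonnegative polynomial on $S^{n-1}$ of degree $m-n+2$ whose zero set is exactly $\{u_1,\dots,u_m\}$. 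A generic product of separating affine factors is not nonnegative, squaring the whole product is too expensive (as you note), and the extra ``degree of slack'' you invoke to handle possible extra support of $\nu$ is not an argument. Without such a polynomial the bound $m-n+2$ is not established. (Even in part (i), your claim that the interpolation construction handles support identification ``with room to spare'' glosses over the same nonnegativity issue; there the fix is immediate: take $p(u)=\prod_{j=1}^m(1-\langle u,u_j\rangle)$, which is nonnegative on the sphere, has degree $m$, and vanishes exactly at the $u_j$.)

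The paper resolves the tension you identify with a sum--of--products construction rather than a single product. Since $\aff\{u_1,\dots,u_m\}=\R^n$, pare the support down to $u_1,\dots,u_{n+1}$ with $\aff\{u_1,\dots,u_{n+1}\}=\R^n$; for $j\le n+1$ let $A_j=\{x\in\R^n : \langle x,v^j\rangle=\beta_j\}$ be the hyperplane spanned by the other $n$ of these points, and set
\[
p(u)=\sum_{j=1}^{n+1}\bigl(\langle u,v^j\rangle-\beta_j\bigr)^2\,\bigl(1-\langle u,u_j\rangle\bigr)\prod_{i=n+2}^{m}\bigl(1-\langle u,u_i\rangle\bigr).
\]
Every factor is nonnegative on $S^{n-1}$, the degree is $2+1+(m-n-1)=m-n+2$, $p$ vanishes at each $u_i$, and $p(w)>0$ for $w\notin\{u_1,\dots,u_m\}$ because $p(w)=0$ would force $w\in A_1\cap\dots\cap A_{n+1}$, which contradicts $\aff\{u_1,\dots,u_{n+1}\}=\R^n$. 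The move you were missing is that squaring only the affine form costs degree $2$ while killing all $n$ points on $A_j$ at once, and summing over $j$ (with the additional factor $1-\langle u,u_j\rangle$ in the $j$-th term) restores the vanishing at $u_j$ and the strict positivity off the support at no further cost in degree. With this $p$, equality of moments up to order $m-n+2$ gives $\int p\,d\nu=\int p\,d\mu=0$, hence $\supp\nu\subseteq\{u_1,\dots,u_m\}$, and your interpolation step (or the paper's polynomials $p_i$ of degree $m-n+1$) then finishes the proof.
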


\begin{proof} We first prove $\eqref{item02}$. Since $\aff \{u_1, \dots, u_m\}=\R^n$, we have $m \geq n+1$ and the support of $\mu$ can be pared down to $n+1$ vectors, say $u_1, \dots, u_{n+1}$, such that $\aff\{u_1, \dots, u_{n+1}\}=\R^n$. For each $j=1, \dots, n+1$, the affine hull 
\begin{equation*}
A_j = \aff (\{u_1, \dots, u_{n+1}\} \setminus\{u_j\})
\end{equation*}
is a hyperplane in $\R^n$, so there is a $v^j \in S^{n-1}$ and $\beta_j \in \R$ such that
\begin{equation*}
A_j = \{ x \in \R^n \mid \IP{x}{v^j} = \beta_j\}. 
\end{equation*}
Now define the polynomial    
\begin{equation*}
 p(u)=\sum_{j=1}^{n+1} (\IP{u}{v^j}- \beta_j)^2(1- \IP{u}{u_{j}})(1- \IP{u}{u_{n+2}})\dots (1-\IP{u}{u_m})
\end{equation*}
for $u \in S^{n-1}$. The degree of $p$ is $m-n+2$, and $p(u_j)=0$ for $j=1, \dots, m$. Let $w \in S^{n-1}\setminus \{u_1, \dots, u_{m}\}$ and assume that $p(w)=0$. Then 
$w \in A_j$ for $j=1, \dots, n+1$, so in particular $w=\sum_{j=1}^{n} \gamma_j u_j$ where $\sum_{j=1}^{n} \gamma_j=1$.
We may assume that $\gamma_1 \neq 0$. Since $w \in A_1$, this implies that $u_1$ is an affine combination of $u_2, \dots, u_{n+1}$, so
\begin{equation*}
A_1 = \aff\{u_1, \dots, u_{n+1}\} = \R^n.
\end{equation*}
This is a contradiction, and we conclude that $p(w) > 0$.

Now let $\nu \in \M$ and assume that $\mu$ and $\nu$ have identical moments up to order $m-n+2$. Since the polynomial $p$ is of degree $m-n+2$, we obtain that
\begin{equation}\label{Eq1}
\int_{S^{n-1}} p(u) \, \nu (du) = \int_{S^{n-1}} p(u) \, \mu (du) = \sum_{j=1}^m \alpha_j \, p(u_j) = 0,
\end{equation}
where we have used that $\mu$ is of the form
\begin{equation*}
\mu = \sum_{j=1}^m \alpha_j \delta_{u_j}
\end{equation*}
for some $\alpha_1, \dots, \alpha_m > 0$.
Equation~\eqref{Eq1} yields that $p(u) = 0$ for $\nu$-almost all $u \in S^{n-1}$ as the polynomial $p$ is non-negative. Then, the continuity of $p$ implies that
\begin{equation*}
\supp \nu \subseteq \{u \in S^{n-1} \mid p(u) = 0\} = \{u_1, \dots, u_m\},
\end{equation*}
so $\nu$ is of the form
\begin{equation}\label{form2}
\nu = \sum_{j=1}^m \beta_j \delta_{u_j}
\end{equation}
with $\beta_j \geq 0$ for $j=1, \dots, m$.

For $i=1, \dots, n+1$, define the polynomial
\begin{equation*}
p_i(u)=(\IP{u}{v^i} - \beta_i)^2(1-\IP{u}{u_{n+2}}) \dots (1-\IP{u}{u_m})
\end{equation*}
for $u \in S^{n-1}$. Then $p_i$ is of degree $m-n+1$ and $p_i(u_j)=0$ for $j \neq i$. If $p_i(u_i)=0$, then $u_i \in A_i$ and we obtain a contradiction as before. Hence $p_i(u_i) > 0$. 
Due to \eqref{form2} and the assumption on coinciding moments, we obtain that
\begin{equation}\label{Eq}
\alpha_i p_i(u_i)=\sum_{j=1}^m \alpha_j p_i(u_j) = \sum_{j=1}^m \beta_j p_i(u_j)=\beta_i p_i(u_i).
\end{equation}
Since $p_i(u_i) > 0$, Equation~\eqref{Eq} implies that $\alpha_i = \beta_i$ for $i=1, \dots, n+1$. 

For $i=n+2, \dots, m$, define the polynomial
\begin{equation*}
p_i(u)=\frac{p(u)}{(1-\IP{u}{u_i})}
\end{equation*}
for $u \in S^{n-1}$. Then $p_i$ is of degree $m-n+1$ and $p_i(u_j)=0$ for $j \neq i$. If $p_i(u_i)=0$, then $u_i \in A_j$ for $j=1, \dots, n+1$, which is a contradiction. Hence, $p_i(u_i)=0$. By arguments as before, we obtain that $\alpha_i=\beta_i$ for $i=n+1, \dots, m$. Hence $\nu=\mu$, which yields \eqref{item02}.

The statement \eqref{item01} can be proved in a similar manner using the polynomials
\begin{equation*}
p(u)=\prod_{j=1}^m (1-\IP{u}{u_j})
\end{equation*}
and 
\begin{equation*}
p_i(u)=\frac{p(u)}{1-\IP{u}{u_i}}
\end{equation*}
for $u \in S^{n-1}$ and $i=1, \dots, m$.
\end{proof}

%%%%%%%%%%%%%%
\begin{theorem} \label{Thm_uniqueness} 
Let $m \geq n+1$. A polytope $P \in \Poly[m]$ with nonempty interior is uniquely determined up to translation in $\K^n$ by its surface tensors up to rank $m-n+2$. If $n=2$, then the result holds for any $P \in \Poly[m]$.

The rank $m-n+2$ is optimal as there is a polytope $P_m \in \Poly[m]$ and a convex body $K_m \notin \Poly[m]$ having identical surface tensors up to rank $m-n+1$.
\end{theorem}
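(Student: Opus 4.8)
\emph{Proof plan.}

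The first assertion I would prove at the level of measures, using the fact recorded in Section~\ref{Sec_prelim} that the components of $\Phi_{n-1}^s(K)$ are scaled moments of $S_{n-1}(K,\cdot)$ of order $s$: thus two convex bodies have the same surface tensors up to rank $r$ if and only if their surface area measures agree on all moments up to order $r$. If $P\in\Poly[m]$ has nonempty interior, then $S_{n-1}(P,\cdot)=\sum_{i=1}^{m'}\alpha_i\delta_{u_i}$ with $m'\le m$ facet normals $u_i$ and positive facet areas $\alpha_i$, and the $u_i$ positively span $\R^n$, so $\aff\{u_1,\dots,u_{m'}\}=\R^n$. By Lemma~\ref{lemma_uniqueness}\eqref{item02}, $S_{n-1}(P,\cdot)$ is the unique element of $\M$ with its moments up to order $m'-n+2\le m-n+2$. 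Hence any $L\in\K^n$ with the same surface tensors as $P$ up to rank $m-n+2$ satisfies $S_{n-1}(L,\cdot)=S_{n-1}(P,\cdot)$; as this measure is not concentrated on a great subsphere, $L$ has nonempty interior, and by Minkowski's uniqueness theorem \cite{Schneider14}, $L$ is a translate of $P$. For $n=2$ the remaining case is that of $P$ with empty interior, where $S_1(P,\cdot)$ has at most two atoms, Lemma~\ref{lemma_uniqueness}\eqref{item01} gives determination by the moments up to order $2\le m=m-n+2$, and a direct argument in the plane shows that $L$ is a translate of $P$.

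For the optimality statement it is enough, by the same correspondence, to produce for each $m\ge n+1$ an $m$-facet polytope $P_m$ and a convex body $K_m$ that is not a polytope whose surface area measures have identical moments up to order $m-n+1$. I would take $P_m$ to be an iterated pyramid. Put $M=m-n+2\ge3$, fix $a_3,\dots,a_n\in(0,1)$, set $\iota_k\colon S^{k-2}\to S^{k-1}$, $\iota_k(\xi)=(\sqrt{1-a_k^2}\,\xi,a_k)$, and $\iota=\iota_n\circ\cdots\circ\iota_3$ (the identity for $n=2$); let $\zeta_1,\dots,\zeta_M\in S^1$ be the $M$-th roots of unity. Beginning with a regular $M$-gon centred at the origin and, at each step, forming the pyramid over the previous polytope with a new vertex on the new coordinate axis, one obtains a polytope $P_m\subseteq\R^n$; choosing the offsets of the bases and the apex heights level by level, one can arrange that every intermediate polytope retains an inscribed ball centred at the origin, and a short computation then shows that the facet of the pyramid over a facet with outer normal $v$ and common support value again has a common support value and outer normal $\iota_k(v)$. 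Hence $P_m$ has exactly $m=M+(n-2)$ facets: $M$ ``deep'' ones with a common area $\alpha$ and outer normals $\iota(\zeta_1),\dots,\iota(\zeta_M)$, and $n-2$ further ones with outer normals $\nu_3,\dots,\nu_n$ and areas $\beta_3,\dots,\beta_n$, so that
\[
S_{n-1}(P_m,\cdot)=\alpha\sum_{j=1}^{M}\delta_{\iota(\zeta_j)}+\sum_{k=3}^{n}\beta_k\,\delta_{\nu_k}.
\]
I would then let $\lambda$ be the arc length measure on $S^1$ and define $K_m$ to be the convex body with $S_{n-1}(K_m,\cdot)=\tfrac{\alpha M}{2\pi}\,\iota_*\lambda+\sum_{k=3}^{n}\beta_k\,\delta_{\nu_k}$.

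The key point is the quadrature identity $\tfrac{\alpha M}{2\pi}\int_{S^1}(f\circ\iota)\,d\lambda=\alpha\sum_{j=1}^{M}(f\circ\iota)(\zeta_j)$ valid for every polynomial $f$ of degree at most $m-n+1$ on $S^{n-1}$: since each $\iota_k$ is the restriction to a sphere of an affine map, $f\circ\iota$ restricts to $S^1$ to a trigonometric polynomial of degree at most $M-1$, and the exponentials $e^{i\ell\theta}$ with $1\le|\ell|\le M-1$ sum to zero over the $M$-th roots of unity. Therefore $S_{n-1}(K_m,\cdot)$ and $S_{n-1}(P_m,\cdot)$ have the same moments up to order $m-n+1$ (the atoms at the $\nu_k$ contribute equally). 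In particular the first moment vector of $S_{n-1}(K_m,\cdot)$ vanishes and its support contains the facet normals of $P_m$, hence spans $\R^n$, so it is not carried by a great subsphere; thus $K_m$ exists and is unique up to translation by Minkowski's existence and uniqueness theorem, and it is not a polytope since $\iota_*\lambda$ is nonatomic. Together with the first assertion, the equality of the surface tensors of $P_m$ and $K_m$ up to rank $m-n+1$ shows that the rank $m-n+2$ cannot be reduced. For $n=2$ the construction is simply a regular $m$-gon together with a ball.

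The tensor--moment correspondence, the invocations of Minkowski's theorem, and the roots-of-unity quadrature are routine. The step I expect to require the most care is the geometric realizability of $P_m$: one must verify that the base offsets $d_k$ and apex heights $h_k$ can be chosen simultaneously so that every intermediate polytope is a genuine polytope with an inscribed ball centred at the origin and with the claimed facet normals and equal deep-facet areas. At each level this amounts to a single equation relating $d_k$ and $h_k$ (equating the distance from the origin to the base facet with that to the lateral facets), and one has to check that it has an admissible positive solution and to track the resulting inradius through the induction.
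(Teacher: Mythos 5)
Your treatment of the first assertion coincides with the paper's: pass from tensors to moments of the surface area measure, apply Lemma~\ref{lemma_uniqueness}~\eqref{item02} (and \eqref{item01} for the planar lower-dimensional case), and conclude with the uniqueness part of Minkowski's theorem; that part is fine. For the optimality part you are really using the same construction as the paper, just unrolled: the paper argues by induction on the dimension, starting from the regular $m$-gon versus the unit disc in $\R^2$ and, at each step, replacing the surface area measures by their pushforwards under $f_\alpha(u)=(\sqrt{1-\alpha^2}\,u,\alpha)$ plus a balancing atom of mass $\alpha S(\cdot)$ at the south pole; both bodies are then obtained from Minkowski's \emph{existence} theorem, and the moment agreement propagates because $f_\alpha$ is affine and the two added atoms have equal mass (equal rank-zero tensors). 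Your map $\iota=\iota_n\circ\cdots\circ\iota_3$ is exactly the composition of these embeddings, and your roots-of-unity quadrature identity is an attractive way of verifying the moment agreement in one stroke rather than along the induction.

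The genuine weak point is the one you flag yourself: you insist on an explicit geometric realization of $P_m$ as an iterated pyramid with inscribed balls and equal deep-facet areas, and that realizability (together with the vanishing of the first moment of your measure for $K_m$, which you deduce from the existence of $P_m$) is left unverified. This extra work is unnecessary. Do for $P_m$ what you already do for $K_m$: define the discrete measure $\alpha\sum_{j}\delta_{\iota(\zeta_j)}+\sum_{k}\beta_k\delta_{\nu_k}$ abstractly, choosing the atoms $\beta_k\delta_{\nu_k}$ level by level (as in the paper's inductive step) so that the first moment vanishes, check that the support is not contained in a great subsphere, and invoke Minkowski's existence theorem; the resulting body is automatically a polytope with at most $m$ facets, so it lies in the required class, and no inradius or facet-area bookkeeping is needed. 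With that modification your plan closes completely; as written, the construction of $P_m$ is a gap, albeit an avoidable one.
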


\begin{proof}
Let $P \in \Poly[m]$ have facet normals $u_1, \dots, u_m \in S^{n-1}$ and nonempty interior. Then, $ \supp S_{n-1}(P, \cdot) = \{u_1, \dots, u_m\}$ and $\aff \{u_1, \dots, u_m\} = \R^n$, so $S_{n-1}(P, \cdot)$ is uniquely determined in $\{S_{n-1}(K, \cdot) \mid K \in \K^n\} \subseteq \M$ by its moments up to order $m -n +2 $ due to Lemma~\ref{lemma_uniqueness} \eqref{item02}. Since the surface tensors of $P$ are rescaled versions of the moments of $S_{n-1}(P, \cdot)$, the first part of the statement follows as a convex body in $\R^n$ with nonempty interior is uniquely determined up to translation by its surface area measure. 
Now assume that $P \subseteq \R^2$ is a polytope in $\Poly[m]$ with empty interior. Then $P$ is contained in an affine hyperplane and $S_{n-1}(P, \cdot)=S(P)(\delta_u + \delta_{-u})$ for some $u \in S^{n-1}$. By Lemma~\ref{lemma_uniqueness} \eqref{item01}, the surface area measure of $P$ is uniquely determined by its moments up to second order. The second part of the statement then follows since any convex body in $\R^2$ is uniquely determined up to translation by its surface area measure.

To show that the rank $m-n+2$ cannot be reduced, we first consider the case $n=2$. For $m \geq 3$, let $P_m$ be a regular polytope in $\R^2$ with outer normals $u_j = (\cos(j \frac{2\pi}{m}), \sin(j\frac{2\pi}{m}))$ for $j=0, \dots m-1$ and facet lengths $\alpha_j=\frac{2\pi}{m}$ for $j=0, \dots, m-1$. Then, $P_m$ and the unit disc $B^2$ in $\R^2$ have identical surface tensors up to rank $m-1$. This is easily seen by calculating and comparing the harmonic intrinsic volumes of $P_m$ and $B^2$. 

Now, counter examples in $\R^n, n \geq 3$ can be constructed inductively. Essentially, if $P_{m-1}'$ and $K_{m-1}'$ are counter examples in $\R^{n-1}$, counter examples $P_m$ and $K_m$ in $\R^n$ are obtained as bounded cones with scaled versions of $P_{m-1}'$ and $K_{m-1}'$ as bases. More precisely, for a fixed $0<\alpha <1$ , define $f_{\alpha} : S^{n-2} \to S^{n-1}$ by $f_{\alpha}(u) = (\sqrt{1-\alpha^2} \,u, \alpha)$ for $u \in S^{n-2}$, and let
\begin{equation*}
\mu_m = f_{\alpha}(S_{n-1}(P_{m-1}', \cdot)) + \alpha S(P_{m-1}')\delta_{-e_3}
\end{equation*}
and
\begin{equation*}
\nu_m = f_{\alpha}(S_{n-1}(K_{m-1}', \cdot)) + \alpha S(K_{m-1}')\delta_{-e_3}.
\end{equation*}
By Minkowski's existence theorem, the measures $\mu_m$ and $\nu_m$ are surface area measures of convex bodies $P_m \in \Poly[m]$ and $K_m \in \K^n$, respectively. Direct calculations show that if $P_{m-1}'$ and $K_{m-1}'$ have identical surface tensors in $\R^{n-1}$ up to rank $(m-1)-(n-1)+1=m-n+1$, then $P_m$ and $K_m$ have identical surface tensors in $\R^n$ up to the same rank. Thus, we obtain that the rank $m-n+2$ is optimal in the sense that it cannot be reduced.
\end{proof}

Due to the one-to-one correspondence between surface tensors up to rank $s$ and harmonic intrinsic volumes up to degree $s$ of a convex body, the uniqueness result in Theorem~\ref{Thm_uniqueness} also holds if surface tensors are replaced by harmonic intrinsic volumes. 
 
\section{Stability results}\label{Sec_stab}

The shape of a convex body $K \in \K^n_n$ is uniquely determined by the set of surface tensors $\{\T[s] \mid s \in \N_0\}$ of $K$, but as described in the previous section, only the shape of polytopes are determined by a finite number of surface tensors. However, for an arbitrary convex body, a finite number of its surface tensors still contain information about its shape. This statement is quantified in this section, where we derive an upper bound of the translative Hausdorff distance between two convex bodies with a finite number of coinciding surface tensors.  

The cone of finite Borel measures $\M$ on $S^{n-1}$ is equipped with the Dudley metric
\begin{equation*}
\dudley{\mu}{\nu}=\sup \bigg\{\bigg \lvert \int_{S^{n-1}} f \, d(\mu - \nu)\bigg \rvert \biggm | \nBL[f] \leq 1 \bigg \} 
\end{equation*}
for $\mu, \nu \in \M$, where
\begin{equation*}
\nBL[f] = \nsup[f] + \nL[f] \quad \text{and} \quad  \nL[f]= \sup_{u \neq v} \frac{{\abs[f(u) - f(v)]}}{\norm[u-v]}
\end{equation*}
for any function $f \colon S^{n-1} \to \R$. It can be shown that the Dudley metric induces the weak topology on $\M$ (the case of probability measures is treated in \cite[Sec. 11.3]{Dudley2002} and is easily generalized to finite measures on $S^{n-1}$)  The set of real-valued functions on $S^{n-1}$ with $\nBL[f] < \infty$ is denoted $\BL$. Further, we let the vector space $L^2(S^{n-1})$ of square integrable functions on $S^{n-1}$ with respect to the spherical Lebesgue measure $\sigma$ be equipped with the usual inner product $\IPto{\cdot}{\cdot}$ and norm $\nto{\cdot}$.

As in \cite[Chap. 2.8.1]{Atkinson2012}, for $k \in \N$, we define the operator $\proj$ on the space $L^2(S^{n-1})$ by 
\begin{equation}\label{def_proj}
(\proj f)(u) = E_{nk} \int_{S^{n-1}} \bigg(\frac{1+ \IP{u}{v}}{2} \bigg)^k f(v) \sigma (dv)
\end{equation}
for $f \in L^2(S^{n-1})$ where the constant
\begin{equation*}
E_{nk}=\frac{(k+n-2)!}{(4\pi)^{\frac{n-1}{2}}\Gamma(k+\frac{n-1}{2})}
\end{equation*}
satisfies
\begin{equation}\label{E}
E_{nk}\int_{S^{n-1}} \bigg(\frac{1+ \IP{u}{v}}{2} \bigg)^k \sigma (du)=1.
\end{equation}
As $(1+\IP{u}{v})^k$ is a polynomial in $\IP{u}{v}$ of order $k$, it follows from the addition theorem for spherical harmonics (see, e.g., \cite[Thm. 3.3.3]{Groemer1996}) that the function $\proj f$ for $f \in L^2(S^{n-1})$ can be expressed as a linear combination of spherical harmonics of degree $k$ or less, see also \cite[pp. 61-62]{Atkinson2012}. More precisely, there are real constants $(a_{nkj})$ such that
\begin{equation}\label{lincomb}
\proj f = \sum_{j=0}^k a_{nkj} P_{nj} f,
\end{equation}
where $P_{nj} f$ is the projection of $f$ onto the space $\sH[j]$ of spherical harmonics of degree $j$. The constants in the linear combination \eqref{lincomb} are given by
\begin{equation*}
a_{nkj}=\frac{k!(k+n-2)!}{(k-j)!(k+n+j-2)!},
\end{equation*}
see \cite[p. 62]{Atkinson2012}. By \cite[Thm. 2.30]{Atkinson2012}, for any continuous function $f \colon S^{n-1} \to \R$, the sequence $(\proj f)_{k \in \N}$ converges uniformly to $f$ when $k \rightarrow \infty$. When $f \in \BL$, Lemma~\ref{Lemma_uniformbound} provides an upper bound for the convergence rate in terms of $\nL[f]$ and $\nsup[f]$.

\begin{lemma}\label{Lemma_uniformbound}
Let $0< \varepsilon < 1$ and $k \in \N$. For $f \in \BL$, we have
\begin{equation}\label{bound}
\nsup[\proj f - f] \leq \sqrt{k}^{\varepsilon-1}\nL[f] + 2 \omega_n E_{nk} \exp(-\frac{1}{4}k^{\varepsilon}) \nsup[f].
\end{equation}
\end{lemma}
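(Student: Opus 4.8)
The plan is to exploit that, by the normalization \eqref{E} together with the symmetry of $\IP{u}{v}$ in its two arguments (and rotational invariance of $\sigma$), the function $v \mapsto E_{nk}\bigl(\frac{1+\IP{u}{v}}{2}\bigr)^{k}$ is, for every fixed $u \in S^{n-1}$, a probability density on $S^{n-1}$ with respect to $\sigma$, and that this density concentrates near $v = u$ when $k$ is large. Using \eqref{E} to write $f(u) = E_{nk}\int_{S^{n-1}}\bigl(\frac{1+\IP{u}{v}}{2}\bigr)^{k} f(u)\,\sigma(dv)$ and subtracting from the definition \eqref{def_proj} of $\proj f$, one obtains
\[
(\proj f)(u) - f(u) = E_{nk}\int_{S^{n-1}}\Bigl(\frac{1+\IP{u}{v}}{2}\Bigr)^{k}\bigl(f(v) - f(u)\bigr)\,\sigma(dv),
\]
so the task reduces to estimating this integral uniformly in $u$.

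First I would fix a threshold $\delta = \delta(k) \in (0,1]$ and split the integral into the near region $\{v \in S^{n-1} : \norm[u-v] \le \delta\}$ and its complement. On the near region, the Lipschitz estimate $\abs[f(v) - f(u)] \le \nL[f]\,\norm[u-v] \le \nL[f]\,\delta$, combined with non-negativity of the kernel and the total-mass identity \eqref{E}, bounds this contribution by $\nL[f]\,\delta$. On the far region I would use the identity $\norm[u-v]^{2} = 2 - 2\IP{u}{v}$ for $u,v \in S^{n-1}$, which turns $\norm[u-v] > \delta$ into $\frac{1+\IP{u}{v}}{2} < 1 - \frac{\delta^{2}}{4}$, hence $\bigl(\frac{1+\IP{u}{v}}{2}\bigr)^{k} \le \bigl(1 - \frac{\delta^{2}}{4}\bigr)^{k} \le \exp\bigl(-\frac{k\delta^{2}}{4}\bigr)$ there. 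Using $\abs[f(v) - f(u)] \le 2\nsup[f]$ and $\sigma(S^{n-1}) = \omega_n$, the far contribution is at most $2\omega_n E_{nk}\exp\bigl(-\frac{k\delta^{2}}{4}\bigr)\nsup[f]$.

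Adding the two bounds gives $\nsup[\proj f - f] \le \nL[f]\,\delta + 2\omega_n E_{nk}\exp\bigl(-\frac{k\delta^{2}}{4}\bigr)\nsup[f]$ for every admissible $\delta$, and I would finish by taking $\delta = \sqrt{k}^{\varepsilon-1}$, which lies in $(0,1]$ for $0 < \varepsilon < 1$ and $k \in \N$ and satisfies $k\delta^{2} = k^{\varepsilon}$; this yields precisely \eqref{bound}. The one place that requires care is this balancing: the near term increases with $\delta$ while the far term decays like $\exp(-k\delta^{2}/4)$, and the choice $\delta^{2} = k^{\varepsilon-1}$ is exactly what converts the decay into the exponent $\frac14 k^{\varepsilon}$ in the statement. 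Beyond the elementary facts $\frac{1+\IP{u}{v}}{2} \le 1 - \frac14\norm[u-v]^{2}$ and $(1-x)^{k} \le e^{-kx}$ for $x \in [0,1]$, I do not anticipate a real obstacle.
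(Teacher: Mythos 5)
Your proposal is correct and follows essentially the same route as the paper: the same kernel-concentration argument with the split at $\norm[u-v]\leq\delta$, the Lipschitz bound on the near region, the sup-norm bound with $\bigl(1-\tfrac{\delta^2}{4}\bigr)^k$ on the far region, and the same choice $\delta=\sqrt{k}^{\varepsilon-1}$. The only (cosmetic) difference is that you bound $\bigl(1-\tfrac{\delta^2}{4}\bigr)^k\leq\exp(-\tfrac14 k^{\varepsilon})$ by the elementary inequality $(1-x)^k\leq e^{-kx}$, whereas the paper invokes the mean value theorem for the logarithm; both are valid.
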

\begin{proof}
We proceed as in the proof of \cite[Thm. 2.30]{Atkinson2012}. Let $f \in \BL$. Using \eqref{def_proj} and \eqref{E}, we obtain that
\begin{align*}
\abs[(\proj f)(u) - f(u)] &\leq E_{nk} \int_{S^{n-1}} \bigg(\frac{1+ \IP{u}{v}}{2} \bigg)^k \abs[f(u) - f(v)] \, \sigma(dv)
\\
&\leq
I_1(\delta,u) + I_2(\delta,u)
\end{align*}
for $ u \in S^{n-1}$ and $0 < \delta < 2 $, where
\begin{equation*}
I_1(\delta,u)=E_{nk} \int_{\{v \in S^{n-1} : \norm[u-v] \leq \delta \} } \bigg(\frac{1+ \IP{u}{v}}{2} \bigg)^k \abs[f(u) - f(v)] \, \sigma(dv)
\end{equation*}
and
\begin{equation*}
I_2(\delta,u)=E_{nk} \int_{\{ v \in S^{n-1} : \norm[u-v] > \delta \} } \bigg(\frac{1+ \IP{u}{v}}{2} \bigg)^k \abs[f(u) - f(v)] \, \sigma(dv).
\end{equation*}
Since $I_1(\delta,u) \leq \delta \nL[f]$ and 
\begin{equation*}
I_2(\delta, u) \leq 2 \omega_n E_{nk} \bigg(1-\frac{\delta^2}{4}\bigg)  \nsup[f],
\end{equation*}
we obtain that
\begin{equation}\label{bound1}
\abs[(\proj f)(u) - f(u)] \leq \delta \nL[f] + 2 \omega_n E_{nk} \bigg(1-\frac{\delta^2}{4}\bigg)^k \nsup[f].
\end{equation}
To derive the upper bound on $I_2$, we have used that $\IP{u}{v} = 1 - \frac{\norm[u-v]^2}{2}$ for $u,v \in S^{n-1}$. 

Now let $\delta=\sqrt{k}^{\varepsilon-1}$. From the mean value theorem, we obtain that
\begin{equation*}
\ln \bigg(1- \frac{\delta^2}{4} \bigg)^k = - \frac{1}{4} k^{\varepsilon}\frac{\ln (1) - \ln( 1 - \frac{1}{4}k^{\varepsilon-1})}{\frac{1}{4}k^{\varepsilon-1}} = - \frac{1}{4} k^\varepsilon \xi_k^{-1}
\end{equation*}
for some $\xi_k \in [1-\frac{1}{4}k^{\varepsilon-1},1]$. Hence,
\begin{equation}\label{bound2}
\bigg(1- \frac{\delta^2}{4} \bigg)^k  \leq \exp(-\frac{1}{4}k^{\varepsilon}).
\end{equation}
Combining \eqref{bound1} and \eqref{bound2} yields the assertion.
\end{proof}

\begin{remark}\label{Rem_convergence_rate}
Stirling's formula, $\Gamma(x) \sim \sqrt{2\pi}x^{x-\frac{1}{2}} \operatorname{e} ^{-x}$ for $x \rightarrow \infty$, implies that
\begin{equation*}
E_{nk} \sim \bigg( \frac{k}{4\pi} \bigg)^{\frac{n-1}{2}}
\end{equation*}
for $k \rightarrow \infty$. Hence, the upper bound in \eqref{bound} converges to zero for $k \rightarrow \infty$.
The choice of $\delta$ in the proof of Lemma~\ref{Lemma_uniformbound} is optimal in the sense that if we use $0  < \delta \leq \frac{c}{\sqrt{k}}$ with a constant $c > 0$, then the derived upper bound in $\eqref{bound}$ does not converge to zero. This follows as
\begin{equation*}
1 \geq \left( 1 - \frac{\delta^2}{4} \right)^k \geq  \left( 1 - \frac{c}{4k} \right)^k \rightarrow e^{-\frac{c}{4}}
\end{equation*}
for $k \rightarrow \infty$, when $0  < \delta \leq \frac{c}{\sqrt{k}}$.
\end{remark}

For functions $f \in \BL$ satisfying $\nBL[f] \leq 1$, Lemma~\ref{Lemma_uniformbound} yields an uniform upper bound, only depending on $k$ and the dimension $n$, of $\nsup[\proj f - f]$. In the following theorem, this is used to derive an upper bound of the Dudley distance between the surface area measures of two convex bodies where the harmonic intrinsic volumes up to a certain degree $s_o \in \N$ are close in $\R^{m_{s_o}}$.

\begin{theorem}\label{Thm_noise}
Let $K, L \in \K^n(R)$ for some $R > 0$ and let $s_o \in \N$. Let $0 < \varepsilon < 1$ and $\delta > 0$. If 
\begin{equation}\label{thm_noise_condition}
\sqrt{\omega_n m_{s_o}} \norm[\HV(K) - \HV(L)] \leq \delta
\end{equation}
then
\begin{equation}\label{Dudley_bound}
\dudley{S_{n-1}(K, \cdot)}{S_{n-1}(L, \cdot)} 
\leq 
c(n,R, \varepsilon)s_o^{\frac{\varepsilon-1}{2}} + \delta,
\end{equation}
where $c>0$ is a constant depending on $n, R$ and $\varepsilon$.
\end{theorem}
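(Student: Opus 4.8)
The plan is to work directly from the definition of the Dudley metric: it suffices to bound $\bigl|\int_{S^{n-1}} f\, d(\mu-\nu)\bigr|$ uniformly over all $f\colon S^{n-1}\to\R$ with $\|f\|_{BL}=\|f\|_\infty+\|f\|_L\le 1$, where $\mu = S_{n-1}(K,\cdot)$ and $\nu = S_{n-1}(L,\cdot)$. For such an $f$ I would split off its ``low-degree part'' via the operator $\Pi_{ns_o}$ from \eqref{def_proj} (i.e.\ with $k=s_o$), writing
\begin{equation*}
\int_{S^{n-1}} f\, d(\mu-\nu) = \int_{S^{n-1}}\bigl(f - \Pi_{ns_o}f\bigr)\, d(\mu-\nu) + \int_{S^{n-1}}\Pi_{ns_o}f\, d(\mu-\nu),
\end{equation*}
and bound the two summands separately; the first will give the error term $c(n,R,\varepsilon)\,s_o^{(\varepsilon-1)/2}$ and the second will give $\delta$.

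For the first summand I would use $\bigl|\int(f-\Pi_{ns_o}f)\,d(\mu-\nu)\bigr|\le\|f-\Pi_{ns_o}f\|_\infty\bigl(\mu(S^{n-1})+\nu(S^{n-1})\bigr)$. Since $\|f\|_\infty\le1$ and $\|f\|_L\le1$, Lemma~\ref{Lemma_uniformbound} bounds $\|f-\Pi_{ns_o}f\|_\infty$ by $s_o^{(\varepsilon-1)/2}+2\omega_n E_{ns_o}\exp(-\tfrac14 s_o^\varepsilon)$, and by Remark~\ref{Rem_convergence_rate} the second term here decays faster than any power of $s_o$, so it is $\le C(n,\varepsilon)\,s_o^{(\varepsilon-1)/2}$ for every $s_o\ge1$. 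The total masses $\mu(S^{n-1})=S_{n-1}(K,S^{n-1})$ and $\nu(S^{n-1})=S_{n-1}(L,S^{n-1})$ are bounded by a constant $c_1(n,R)$ for bodies in $\K^n(R)$, by monotonicity of the total surface area under inclusion (treating lower-dimensional bodies directly). Multiplying yields the first summand $\le c(n,R,\varepsilon)\,s_o^{(\varepsilon-1)/2}$.

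For the second summand the key point is that, by \eqref{lincomb}, $\Pi_{ns_o}f$ is a linear combination of spherical harmonics of degree at most $s_o$, say $\Pi_{ns_o}f=\sum_{k=0}^{s_o}\sum_{j=1}^{N(n,k)} b_{kj}H_{nkj}$ with $b_{kj}=\int_{S^{n-1}}\Pi_{ns_o}f\,H_{nkj}\,d\sigma$, so that
\begin{equation*}
\int_{S^{n-1}}\Pi_{ns_o}f\, d(\mu-\nu)=\sum_{k=0}^{s_o}\sum_{j=1}^{N(n,k)} b_{kj}\bigl(\psi_{(n-1)kj}(K)-\psi_{(n-1)kj}(L)\bigr).
\end{equation*}
To estimate the coefficients I would exploit that $\Pi_{ns_o}$ is an averaging operator: its kernel $\bigl(\tfrac{1+\langle u,v\rangle}{2}\bigr)^{s_o}$ is nonnegative and by \eqref{E} integrates (against $E_{ns_o}\sigma$) to $1$, hence $\|\Pi_{ns_o}f\|_\infty\le\|f\|_\infty\le1$, and therefore $|b_{kj}|\le\|\Pi_{ns_o}f\|_\infty\int_{S^{n-1}}|H_{nkj}|\,d\sigma\le\sqrt{\omega_n}$ by Cauchy--Schwarz and $\|H_{nkj}\|_2=1$. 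Since the sum has $m_{s_o}=\sum_{k=0}^{s_o}N(n,k)$ terms, $\bigl(\sum_{k,j}b_{kj}^2\bigr)^{1/2}\le\sqrt{\omega_n m_{s_o}}$, and a second application of Cauchy--Schwarz together with \eqref{thm_noise_condition} gives $\bigl|\int\Pi_{ns_o}f\,d(\mu-\nu)\bigr|\le\sqrt{\omega_n m_{s_o}}\,\|\psi_{n-1}^{s_o}(K)-\psi_{n-1}^{s_o}(L)\|\le\delta$. Adding the two bounds and taking the supremum over $f$ gives \eqref{Dudley_bound}. The step I expect to require the most care is matching the normalizing constant exactly to $\delta$, which is precisely what forces the use of the $L^\infty$-contractivity of $\Pi_{ns_o}$ (giving coefficient bound exactly $\sqrt{\omega_n}$ and exactly $m_{s_o}$ coefficients) rather than a cruder estimate on $\|\Pi_{ns_o}f\|_2$; the remaining work — absorbing the exponentially small term of Lemma~\ref{Lemma_uniformbound} into $s_o^{(\varepsilon-1)/2}$ and bounding the total surface area of bodies in $\K^n(R)$ — is routine.
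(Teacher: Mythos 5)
Your proposal is correct and follows essentially the same route as the paper: the same splitting of $\int f\,d(\mu-\nu)$ via $\Pi_{ns_o}$, Lemma~\ref{Lemma_uniformbound} with the total masses bounded by a constant depending on $n$ and $R$ for the high-degree part, and a Cauchy--Schwarz argument yielding exactly the factor $\sqrt{\omega_n m_{s_o}}$ for the low-degree part. The only (harmless) cosmetic difference is how you bound the harmonic coefficients of $\Pi_{ns_o}f$ by $\sqrt{\omega_n}$ — via $L^\infty$-contractivity of the averaging kernel rather than via $\lvert a_{ns_oj}\rvert\le 1$ and $\lVert f\rVert_2\le\sqrt{\omega_n}$ as in the paper.
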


Due to the addition theorem for spherical harmonics, the condition \eqref{thm_noise_condition} is independent of the bases of $\sH$, $k \in \N$ that are used to derive the harmonic intrinsic volumes.

\begin{proof}[Proof of Theorem~\ref{Thm_noise}]
Let $f \in \BL$ with $\nBL[f] \leq 1$ and define the signed Borel measure $\nu=S_{n-1}(K, \cdot) - S_{n-1}(L, \cdot)$. Then, by \eqref{lincomb}, 
\begin{equation*}
\proj[s_o] f = \sum_{j=0}^{s_o} a_{ns_oj}\sum_{i=0}^{N(n,j)} \IPto{f}{H_{nji}} H_{nji},
\end{equation*}
where $\abs[a_{ns_oj}] \leq 1$. Since $\nto{f} \leq \sqrt{\omega_n} \nsup[f] \leq \sqrt{\omega_n}$, we obtain from Cauchy-Schwarz' inequality and a discrete version of Jensen's inequality that
\begin{align*}
\bigg\lvert \int_{S^{n-1}} \proj[s_o] f \, d\nu \bigg \rvert 
&\leq 
\sqrt{\omega_n} \sum_{j=0}^{{s_o}} \sum_{i=0}^{N(n,j)} \left \vert \int_{S^{n-1}} H_{nji} \, d\nu \right \vert 
\\
&\leq \bigg(\omega_n \bigg(\sum_{l=0}^{s_o} N(n,l) \bigg) \sum_{j=0}^{s_o} \sum_{i=0}^{N(n,j)} \bigg(\int_{S^{n-1}} H_{nji} \, d\nu\bigg)^2 \bigg)^{\frac{1}{2}}
\\
&=\sqrt{\omega_n m_{s_o}} \, \norm[\HV(K) - \HV(L)].
\end{align*}
Hence,
\begin{align*}
\bigg \rvert \int_{S^{n-1}} f \,d\nu \bigg \lvert &\leq  \bigg \lvert\int_{S^{n-1}} {\proj[s_o]} f - f \,d \nu \bigg \rvert + \bigg\lvert \int_{S^{n-1}} \proj[s_o] f \, d\nu \bigg \rvert
\\
&\leq
2R^{n-1}\omega_n(s_o^{\frac{\varepsilon-1}{2}} + 2 \omega_n E_{ns_o} \exp(-\frac{1}{4}s_o^{\varepsilon})) + \delta,
\end{align*}
where we used Lemma~\ref{Lemma_uniformbound} and that $\max\{S_{n-1}(K, S^{n-1}), S_{n-1}(L, S^{n-1})\} \leq R^{n-1}\omega_n$. For $k \rightarrow \infty$, the convergence of $E_{nk} \exp(-\frac{1}{4}k^{\varepsilon})$ to zero is faster than the convergence of $k^{\frac{\varepsilon-1}{2}}$, see Remark~\ref{Rem_convergence_rate}. This implies the existence of a constant $c$ only depending on $n, R$ and $\varepsilon$ satisfying \eqref{Dudley_bound}.
\end{proof}

\begin{corollary}\label{Cor_bound_dudley}
Let $K, L \in \K^n(R)$ for some $R > 0$ and let $0 < \varepsilon < 1$. If $\T=\Phi_{n-1}^s(L)$ for $0 \leq s \leq s_o$, then
\begin{equation*}
\dudley{S_{n-1}(K, \cdot)}{S_{n-1}(L, \cdot)} 
\leq
c(n,R, \varepsilon)s_o^{\frac{\varepsilon-1}{2}},
\end{equation*}
where $c > 0$ is a constant depending on $n,R$ and $\varepsilon$.
\end{corollary}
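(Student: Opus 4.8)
\emph{Plan.} Corollary~\ref{Cor_bound_dudley} is the $\delta\downarrow 0$ specialization of Theorem~\ref{Thm_noise}, once one records that equality of the surface tensors up to rank $s_o$ is the same as equality of the harmonic intrinsic volumes up to degree $s_o$. So the plan is: (i) translate the hypothesis on surface tensors into a statement about harmonic intrinsic volumes; (ii) apply Theorem~\ref{Thm_noise} with an arbitrary error term $\delta>0$; (iii) let $\delta\downarrow 0$.

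\emph{Step (i): reduction to harmonic intrinsic volumes.} By the discussion in Section~\ref{Sec_prelim}, there is an invertible linear map $f\colon\R^{m_{s_o}}\to\R^{m_{s_o}}$ with $f(\phi_{n-1}^{s_o}(M))=\psi_{n-1}^{s_o}(M)$ for every $M\in\K^n$. The assumption that $\Phi_{n-1}^s(K)=\Phi_{n-1}^s(L)$ for $0\le s\le s_o$ implies in particular that the vectors of distinct components of the rank $s_o-1$ and rank $s_o$ surface tensors agree, i.e. $\phi_{n-1}^{s_o}(K)=\phi_{n-1}^{s_o}(L)$; applying $f$ gives $\psi_{n-1}^{s_o}(K)=\psi_{n-1}^{s_o}(L)$, and hence $\lVert \psi_{n-1}^{s_o}(K)-\psi_{n-1}^{s_o}(L)\rVert=0$.

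\emph{Steps (ii)--(iii): invoking Theorem~\ref{Thm_noise}.} Since $K,L\in\K^n(R)$ and, for every $\delta>0$,
\[
\sqrt{\omega_n m_{s_o}}\,\lVert \psi_{n-1}^{s_o}(K)-\psi_{n-1}^{s_o}(L)\rVert=0\le\delta,
\]
condition \eqref{thm_noise_condition} holds, so Theorem~\ref{Thm_noise} yields
\[
\dudley{S_{n-1}(K,\cdot)}{S_{n-1}(L,\cdot)}\le c(n,R,\varepsilon)\,s_o^{\frac{\varepsilon-1}{2}}+\delta .
\]
Letting $\delta\downarrow 0$ gives the claim. This last step is legitimate precisely because the constant $c(n,R,\varepsilon)$ furnished by Theorem~\ref{Thm_noise} depends only on $n$, $R$ and $\varepsilon$, and not on $\delta$.

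\emph{Expected main obstacle.} There is essentially no analytic difficulty here beyond what is already in Theorem~\ref{Thm_noise}; the corollary is a short consequence. The only points worth stating carefully are that the stated coincidence of surface tensors genuinely forces equality of the whole vector $\phi_{n-1}^{s_o}$ (hence of $\psi_{n-1}^{s_o}$ via the invertible map $f$), and that the passage $\delta\downarrow 0$ is admissible because the constant in Theorem~\ref{Thm_noise} is $\delta$-independent.
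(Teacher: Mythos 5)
Your proposal is correct and follows essentially the same route as the paper: coinciding surface tensors up to rank $s_o$ force $\lVert \psi_{n-1}^{s_o}(K)-\psi_{n-1}^{s_o}(L)\rVert=0$, after which Theorem~\ref{Thm_noise} gives the bound (the paper simply applies it with $\delta=0$, while you take $\delta\downarrow 0$, a purely cosmetic difference since the constant is $\delta$-independent).
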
  

\begin{proof}
The assumption that $K$ and $L$ have coinciding surface tensors up to rank $s_o$ implies that $\norm[\HV(K) - \HV(L)]=0$. The result then follows from Theorem~\ref{Thm_noise} with $\delta=0$.
\end{proof}

The translative Hausdorff distance between two convex bodies in $\K^n(r,R)$ admits an upper bound expressed by the $n$'th root of the Prokhorov distance between their surface area measures, see \cite[Thm. 8.5.3]{Schneider14}. Further, the Prokhorov distance between two Borel measures on $S^{n-1}$ can be bounded in terms of the square root of the Dudley distance between the measures. Therefore, Corollary~\ref{Cor_bound_dudley} in combination with \cite[Thm. 8.5.3]{Schneider14} and \cite[Lemma 9.5]{Gardner2006} yield the following stability result.

\begin{theorem}\label{thm_stability}
Let $K,L \in \K^n(r,R)$ for some $0 < r < R$ and let $0 < \varepsilon < 1$. If $\T = \Phi_{n-1}^s(L)$ for $0 \leq s \leq s_o$, then
\begin{equation*}
\delta^t(K,L) \leq c(n,r,R,\varepsilon)s_o^{-\frac{1 - \varepsilon}{4n}}
\end{equation*}
for a constant $c > 0
$ depending on $n,r,R$ and $\varepsilon$.
\end{theorem}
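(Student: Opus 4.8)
The plan is to chain together the three ingredients already assembled: Corollary~\ref{Cor_bound_dudley}, the bound \cite[Thm.~8.5.3]{Schneider14} relating the translative Hausdorff distance to the Prokhorov distance between surface area measures, and the comparison \cite[Lemma~9.5]{Gardner2006} between the Prokhorov and Dudley metrics. First I would fix $K, L \in \K^n(r,R)$ with $\Phi_{n-1}^s(K) = \Phi_{n-1}^s(L)$ for $0 \le s \le s_o$, and apply Corollary~\ref{Cor_bound_dudley} to get
\begin{equation*}
\dudley{S_{n-1}(K, \cdot)}{S_{n-1}(L, \cdot)} \leq c_1(n,R,\varepsilon)\, s_o^{\frac{\varepsilon - 1}{2}}.
\end{equation*}
Next I would invoke \cite[Lemma~9.5]{Gardner2006}, which says the Prokhorov distance $d_P(\mu,\nu)$ between two finite Borel measures is bounded above by a constant times $\sqrt{d_D(\mu,\nu)}$ (with the constant depending on the total masses, which here are at most $\omega_n R^{n-1}$ and hence absorbed into a constant depending on $n$ and $R$); this yields
\begin{equation*}
d_P(S_{n-1}(K, \cdot), S_{n-1}(L, \cdot)) \leq c_2(n,R,\varepsilon)\, s_o^{\frac{\varepsilon - 1}{4}}.
\end{equation*}

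Then I would appeal to \cite[Thm.~8.5.3]{Schneider14}, which for convex bodies in $\K^n(r,R)$ bounds the translative Hausdorff distance by a constant (depending on $n$, $r$, $R$) times the $n$-th root of the Prokhorov distance between their surface area measures. Substituting the previous display gives
\begin{equation*}
\delta^t(K,L) \leq c_3(n,r,R) \big( c_2(n,R,\varepsilon)\, s_o^{\frac{\varepsilon-1}{4}} \big)^{1/n} = c(n,r,R,\varepsilon)\, s_o^{-\frac{1-\varepsilon}{4n}},
\end{equation*}
which is exactly the claimed estimate after collecting the constants into a single $c(n,r,R,\varepsilon)$. The lower bound $r$ on the inradius is what makes \cite[Thm.~8.5.3]{Schneider14} applicable — it is essential that both bodies lie in $\K^n(r,R)$, not merely in $\K^n(R)$, since the stability of the surface-area-measure-to-body map degenerates as the inradius shrinks.

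There is essentially no new computation here: the statement is a bookkeeping composition of three known estimates, and the only care needed is in tracking how the exponents compose (a square root from Dudley-to-Prokhorov, then an $n$-th root from Prokhorov-to-Hausdorff, turning $s_o^{(\varepsilon-1)/2}$ into $s_o^{-(1-\varepsilon)/(4n)}$) and in checking that every constant introduced depends only on the allowed parameters $n, r, R, \varepsilon$ — in particular that the total masses of the surface area measures, bounded via $K, L \subseteq R B^n$ by $\omega_n R^{n-1}$, enter only through $n$ and $R$. The mild subtlety, and the one place I would be most careful, is making sure the hypotheses of \cite[Thm.~8.5.3]{Schneider14} and \cite[Lemma~9.5]{Gardner2006} are met verbatim (finiteness of measures, the role of the common total mass, and whether the cited Prokhorov--Dudley comparison is stated for probability measures and needs a trivial rescaling by the total mass); none of this is deep, but it is where an error could hide.
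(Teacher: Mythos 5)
Your proposal is correct and follows exactly the paper's own route: Corollary~\ref{Cor_bound_dudley} for the Dudley bound, then \cite[Lemma 9.5]{Gardner2006} for the Prokhorov--Dudley comparison (square root), then \cite[Thm. 8.5.3]{Schneider14} for the $n$-th root bound on $\delta^t$, with the exponent composing to $s_o^{-(1-\varepsilon)/(4n)}$. The paper's proof is just this same chaining, stated in one sentence, so there is nothing to add.
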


\section{Reconstruction of shape from surface tensors}\label{Sec_Recon}
In this section, we derive an algorithm that approximates the shape of an unknown convex body $K \in \K^n_n$ from a finite number of surface tensors $\{\T \mid 0 \leq s \leq s_o\}$ of $K$ for some $s_o \in \N$. The reconstruction algorithm is a generalization to higher dimension of Algorithm Surface Tensor in \cite{Kousholt2016} that reconstructs convex bodies in $\R^2$ from surface tensors. The shape of a convex body $K$ in $\R^n$ is uniquely determined by the surface tensors of $K$, when $K$ has nonempty interior, see \cite[Sec. 4, p. 10]{Kousholt2016}. For $n=2$, the surface tensors of $K$ determine the shape of $K$ even when $K$ is lower dimensional. Therefore, the algorithm in \cite{Kousholt2016} can be used to approximate the shape of arbitrary convex bodies in $\R^2$, whereas the algorithm described in this section only allows for convex bodies in $\R^n$ with nonempty interior. A non-trivial difference between the algorithm in the two-dimensional setting and the generalized algorithm is that in higher dimension, it is crucial that the first and second order moments of a Borel measure $\mu$ on $S^{n-1}$ determine if $\mu$ is the surface area measure of a convex body. Therefore, this is shown in Lemma~\ref{lemma_supportmeasures} that is based on the following remark. 

%In order to adapt Algorithm Surface Tensor to a higher dimensional setting, we make use of Lemma~\ref{lemma_supportmeasures} that yields that the first and second order moments of a Borel measure $\mu$ on $S^{n-1}$ determine if $\mu$ is the surface area measure of a convex body. Lemma~\ref{lemma_supportmeasures} is based on the following remark.

\begin{remark}\label{rem_support}
Let $\mu$ be a Borel measure on the unit sphere $S^{n-1}$. Then,
\begin{equation}\label{integral}
\int_{S^{n-1}} \IP{z}{u}^2 \mu (du) > 0
\end{equation}
for all $z \in S^{n-1}$ if and only if the support of $\mu$ is full-dimensional (meaning that the support of $\mu$ is not contained in any great subsphere of $S^{n-1}$). As the integral in \eqref{integral} 
is determined by the second order moments
\begin{equation*}
m_{ij}(\mu) = \int_{S^{n-1}} u_i u_j \, \mu (du)
\end{equation*}
of $\mu$, these moments determine if the support of $\mu$ is full-dimensional. More precisely, the support of $\mu$ is full-dimensional if and only if the matrix of second order moments $M(\mu)=\{m_{ij}(\mu)\}_{i,j=1}^n$ is positive definite as   
\begin{equation*}
z^{\top} M(\mu) z = \int_{S^{n-1}} \IP{z}{u}^2 \mu (du)
\end{equation*}
for $z \in \R^{n}$.
\end{remark}

\begin{lemma}\label{lemma_supportmeasures}
Let $\mu$ be a Borel measure on $S^{n-1}$ with $\mu(S^{n-1})> 0$.
\begin{enumerate}[(i)]
\item\label{item1support} The measure $\mu$ is the surface area measure of a convex body $K \in \K^n_n$, if and only if the first order moments of $\mu$ vanish and the matrix $M(\mu)$ of second order moments of $\mu$ is positive definite.

\item\label{item2support} The measure $\mu$ is the surface area measure of a convex body $K \in \K^n \setminus \K^n_n$ if and only if the first order moments of $\mu$ vanish and the matrix $M(\mu)$ of second order moments of $\mu$ has one positive eigenvalue and $n-1$ zero eigenvalues. 
\end{enumerate}

In the case, where \eqref{item2support} is satisfied, the measure $\mu$ is the surface area measure of every convex body $K$ with surface area $\frac{1}{2}\mu(S^{n-1})$ contained in a hyperplane with normal vector $u$, where $u \in S^{n-1}$ is a unit eigenvector of $M(\mu)$ corresponding to the positive eigenvalue (u is unique up to sign).

\end{lemma}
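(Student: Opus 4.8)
The plan is to read off both equivalences from Minkowski's existence and uniqueness theorem (see \cite{Schneider14}), using Remark~\ref{rem_support} as a dictionary between its hypotheses and conditions on the moment matrix $M(\mu)$. A preliminary observation is that under either set of hypotheses $\mu$ is automatically finite: the entries $m_{ii}(\mu)$ are finite, and $\mu(S^{n-1}) = \int_{S^{n-1}} \sum_{i=1}^n u_i^2 \, \mu(du) = \operatorname{tr} M(\mu) < \infty$, so Minkowski's theorem applies.

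For \eqref{item1support} I would argue as follows. Minkowski's existence theorem states that a finite Borel measure on $S^{n-1}$ is the surface area measure of a body in $\K^n_n$ precisely when its first order moments vanish and it is not concentrated on a great subsphere, and the body is then unique up to translation. Since $\supp \mu$ is closed, ``$\mu$ not concentrated on a great subsphere'' coincides with ``$\supp \mu$ is full-dimensional'', which by Remark~\ref{rem_support} is exactly ``$M(\mu)$ is positive definite''. Part \eqref{item1support} is then immediate.

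For \eqref{item2support}, the forward direction is a direct computation: if $\mu = S_{n-1}(K, \cdot)$ with $K \in \K^n \setminus \K^n_n$, then $\mu = S(K)(\delta_u + \delta_{-u})$ for a unit normal $u$ of a hyperplane containing $K$; the first order moments vanish by symmetry, and $M(\mu) = 2 S(K)\, u u^\top$, which has the simple positive eigenvalue $2S(K) > 0$ and the eigenvalue $0$ with multiplicity $n-1$. For the converse I would take $u$ to be a unit eigenvector for the positive eigenvalue and first show $\supp \mu = \{u, -u\}$: for every $z \in u^{\perp}$ one has $0 = z^\top M(\mu) z = \int_{S^{n-1}} \IP{z}{w}^2 \, \mu(dw)$, so $\IP{z}{w} = 0$ for $\mu$-almost every $w$, and letting $z$ range over an orthonormal basis of $u^{\perp}$ forces $w \in \{u, -u\}$ for $\mu$-almost every $w$. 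Hence $\mu = a \delta_u + b \delta_{-u}$ with $a, b \geq 0$; the vanishing first order moments give $a = b$, and $\mu(S^{n-1}) > 0$ gives $a = b = \tfrac12 \mu(S^{n-1}) > 0$. Finally I would produce a witness: any convex body contained in a hyperplane orthogonal to $u$ with surface area $a$ --- for instance a suitably scaled $(n-1)$-dimensional ball --- has surface area measure $S(K)(\delta_u + \delta_{-u}) = \mu$ and lies in $\K^n \setminus \K^n_n$. This proves \eqref{item2support} and the closing statement simultaneously, uniqueness of $u$ up to sign being the simplicity of the positive eigenvalue of $M(\mu)$.

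I do not anticipate a real obstacle: the argument is essentially bookkeeping around Minkowski's theorem and Remark~\ref{rem_support}. The two points that need a little care are verifying that the hypotheses force $\mu$ to be finite, so that Minkowski's theorem is available, and, in the converse of \eqref{item2support}, upgrading the eigenvalue pattern to the statement that $\supp \mu$ is exactly the two-point set $\{u, -u\}$ rather than merely some great subsphere.
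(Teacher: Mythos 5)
Your proof is correct, and for part \eqref{item1support} and the forward direction of \eqref{item2support} it coincides with the paper's argument (Minkowski's existence theorem combined with Remark~\ref{rem_support}, plus the explicit two-point form of the surface area measure of a lower-dimensional body). The only real divergence is in the converse of \eqref{item2support}: the paper defines the comparison measure $\nu=\tfrac{\alpha}{2}(\delta_u+\delta_{-u})$, checks that $\mu$ and $\nu$ share all moments up to order $2$, and then invokes Lemma~\ref{lemma_uniqueness}\eqref{item01} (moment determinacy of finitely supported measures, here with $m=2$) to conclude $\mu=\nu$; you instead deduce directly from $z^{\top}M(\mu)z=0$ for $z\in u^{\perp}$ that $\supp\mu\subseteq\{u,-u\}$, and then pin down the two atoms from the vanishing first order moments and $\mu(S^{n-1})>0$. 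Your route is a self-contained elementary argument that in effect reproves the $m=2$ case of Lemma~\ref{lemma_uniqueness}\eqref{item01}, and it has the small added merit of noting explicitly that the hypotheses force $\mu$ to be finite via $\mu(S^{n-1})=\operatorname{tr}M(\mu)$, a point the paper leaves implicit; the paper's route is shorter because it reuses machinery already established. Both yield the closing statement about bodies of surface area $\tfrac12\mu(S^{n-1})$ in a hyperplane orthogonal to $u$ in the same way.
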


\begin{proof}
Remark~\ref{rem_support} implies that the interior of a convex body $K$ is nonempty if and only if the matrix of second order moments of $S_{n-1}(K, \cdot)$ is positive definite, so the statement \eqref{item1support} follows from Minkowski's existence theorem, \cite[Thm. 8.2.2]{Schneider14}. 

If $\mu$ is the surface area measure of $K \in \K^n \setminus \K^n_n$, then $\mu$ is of the form 
\begin{equation*}
\mu= \frac{\mu(S^{n-1})}{2}(\delta_u + \delta_{-u})
\end{equation*}
for some $u \in S^{n-1}$. Then, the first order moments of $\mu$ vanish, and the matrix $M(\mu)$ of second order moments of $\mu$ is  $\mu(S^{n-1}) u^2$. Hence, $M(\mu)$ has one positive eigenvalue $\mu(S^{n-1})$ with eigenvector $u$ and $n-1$ zero eigenvalues.

If the matrix $M(\mu)$ is positive semidefinite with one positive eigenvalue $\alpha > 0$ and $n-1$ zero eigenvalues, then $M(\mu)=\alpha u^2$, where $u \in S^{n-1}$ is a unit eigenvector (unique up to sign) corresponding to the positive eigenvalue. Assume further that the first order moments of $\mu$ vanish, and define the measure $\nu=\frac{\alpha}{2}(\delta_u + \delta_{-u})$. Then $\mu$ and $\nu$ have identical moments up to order $2$, and Lemma~\ref{lemma_uniqueness} \eqref{item01} yields that $\mu=\nu$. Therefore, $\mu$ is the surface area measure of any convex body $K$ with surface area $\alpha$ contained in a hyperplane with normal vector $u$.
\end{proof}

\subsection{Reconstruction algorithm based on surface tensors}\label{subsec_recon}

Let $K_0 \in \K^n_n$ be fixed. We consider $K_0$ as unknown and assume that the surface tensors $\Tnul[0], \dots, \Tnul[s_o]$ of $K_0$ are known up to rank $s_o$ for some natural number $s_o \geq 2$. The aim is to construct a convex body with surface tensors identical to the known surface tensors of $K_0$. We proceed as in \cite[Sec. 5.1]{Kousholt2016}. 

Let
\begin{equation}\label{def_Ms}
  \Ms=\{(\alpha, \textbf{u}) \in \R^{m_{s_o}} \times (S^{n-1})^{m_{s_o}} \mid \alpha_j \geq 0, \quad \sum_{j=1}^{m_{s_o}} \alpha_j u_j = 0 \},
 \end{equation}   
and consider the minimization problem
\begin{equation}\label{minimization}
\min_{(\alpha, \textbf{u}) \in \Ms} \sum_{j=1}^{m_{s_o}} \bigg( \phi_{n-1}^{s_o}(K_0)_j - \sum_{i=1}^{m_{s_o}}  \alpha_i g_{s_oj}(u_i)\bigg)^2,
\end{equation}
where $g_{s_oj} \colon S^{n-1} \to \R$ is the polynomial that satisfies that
\begin{equation*}
\int_{S^{n-1}} g_{s_oj}(u) \, S_{n-1}(K_0, du) = \phi_{n-1}^{s_o}(K_0)_j
\end{equation*}
for $j=1, \dots, m_{s_o}$.
Notice, that the objective function in \eqref{minimization} is known, as the surface tensors $\Tnul[s_o-1]$ and $\Tnul[s_o]$ are assumed to be known. By \cite[Thm. 4.1]{Kousholt2016}, there exists a polytope P (not necessarily unique) with at most $m_{s_o}$ facets and surface tensors identical to the surface tensors of $K_0$ up to rank $s_o$. Now, let $v_1, \dots, v_{m_{s_o}} \in S^{n-1}$ be the outer normals of the facets of such a polytope $P$ and $a_1, \dots, a_{m_{s_o}} \geq 0$ be the corresponding $(n-1)$-dimensional volumes of the facets.  If $P$ has $k < m_{s_o}$ facets, then $a_{k+1}= \dots =a_{m_{s_o}}=0$.
Then $S_{n-1}(P, \cdot)= \sum_{j=1}^{m_{s_o}} a_j \delta_{v_j}$, and
\begin{equation*}
\phi_{n-1}^{s_o}(P)_j = \sum_{i=1}^{m_{s_o}} a_i g_{s_oj}(v_i).
\end{equation*}
As $P$ and $K_0$ has identical surface tensors up to rank $s_o$, this implies that
\begin{equation}\label{minimum_is_0}
\sum_{j=1}^{m_{s_o}} \bigg( \phi_{n-1}^{s_o}(K_0)_j - \sum_{i=1}^{m_{s_o}}  a_i g_{s_oj}(v_i)\bigg)^2 = 0.
\end{equation}
Therefore,
$
(a, \textbf{v})=(a_1, \dots, a_{m_{s_o}}, v_1, \dots, v_{m_{s_o}}) \in \Ms
$
is a solution to the minimization problem \eqref{minimization}.

Now, let $(\alpha,\textbf{u}) \in \Ms$ be an arbitrary solution to \eqref{minimization} and define the Borel measure $\varphi=\sum_{i=1}^{m_{s_o}} \alpha_i \delta_{u_i}$ on $S^{n-1}$. As the minimum value of the objective function is $0$ due to \eqref{minimum_is_0}, the moments of $\varphi$ and $S_{n-1}(K_0, \cdot)$ of order $s_o-1$ and $s_o$ are identical. This implies that the moments of $\varphi$ and $S_{n-1}(K_0,\cdot)$ of order $1$ and $2$ are identical as $s_o \geq 2$, see \cite[Remark 3.1]{Kousholt2016}. Then Lemma~\ref{lemma_supportmeasures} \eqref{item1support} yields the existence of a polytope $Q \in \Poly$ with nonempty interior such that $S_{n-1}(Q, \cdot)= \varphi$. The surface tensors of $Q$ are identical to the surface tensors of $K_0$ up to rank $s_o$. 

In the two-dimensional setup in \cite[Sec. 5.1]{Kousholt2016}, every vector in $\Ms$ corresponds to the surface area measure of a polytope. In the $n$-dimensional setting, this is not the case, as Minkowski's existence theorem requires that the linear hull of the vectors $\alpha_1 u_1, \dots, \alpha_{m_{s_o}}u_{m_{s_o}}$ is $\R^n$, when $n>2$. However, as the above considerations show, Lemma~\ref{lemma_supportmeasures} ensures that every solution vector to the minimization problem~\eqref{minimization}, in fact, corresponds to the surface area measure of a polytope, which is sufficient to obtain a polytope with the required surface tensors.

The minimization problem~\eqref{minimization} can be solved numerically, and a polytope corresponding to the solution $(\alpha, \textbf{u}) \in \Ms$ can be constructed using Algorithm MinkData described in \cite{Lemordant1993}, (see also \cite[Sec. A.4]{Gardner06}). This polytope has surface tensors identical to the surface tensors of $K_0$ up to rank $s_o$.
	
\paragraph*{Algorithm Surface Tensor ($n$-dim)}
\begin{description}
\item[Input:] A natural number $s_o \geq 2$ and surface tensors $\Phi_{n-1}^{s_o-1}(K_0)$ and $\Phi_{n-1}^{s_o}(K_0)$ of an unknown convex body $K_0 \in \K^n_n$.

\item[Task:] Construct a polytope $\mK$ in $\R^n$ with at most $m_{s_o}$ facets such that $\mK$ and $K_0$ have identical surface tensors up to rank $s_o$.

\item[Action:]
Find a vector $(\alpha,\textbf{u}) \in \Ms$ that minimizes
\begin{equation*}
\sum_{j=1}^{m_{s_o}} \bigg( \phi_{n-1}^{s_o}(K_0)_j - \sum_{i=1}^{m_{s_o}}  \alpha_i g_{s_oj}(u_i)\bigg)^2.
\end{equation*}
The vector $(\alpha,\textbf{u})$ describes a polytope $\mK$ in $\R^n$ with at most $m_{s_o}$ facets. Reconstruct $\mK$ from $(\alpha,\textbf{u})$ using Algorithm MinkData.

\end{description}

\begin{remark}\label{rem_robust}
Solving the minimization problem \eqref{minimization} numerically might introduce small errors, such that the surface tensors $\Phi_{n-1}^{s_o-1}(\mK)$ and $\Phi_{n-1}^{s_o}(\mK)$ are only approximations of the surface tensors $\Tnul[{s_o-1}]$ and $\Tnul[s_o]$. Small errors in the surface tensors of rank $s_o-1$ and $s_o$ imply the risk of huge errors in the surface tensors of rank less than $s_o$. This follows from the way the surface tensors $\Phi_{n-1}^s$, $0 \leq s \leq s_o$ are related to the surface tensors $\Phi_{n-1}^{s_o-1}$ and $\Phi_{n-1}^{s_o}$ as described in Section~\ref{Sec_prelim}, see \eqref{Constant_c}. The main problem is the constant 
\begin{equation*}
c_{s,s_o}=\frac{s_o! \, \omega_{s_o}}{s! \, \omega_{s+1}}
\end{equation*}
that increases rapidly with $s_o$ for fixed $s$, and therefore might cause huge errors in, for instance, the surface area of $\mK$. The algorithm can be made more robust to numerical errors by replacing the surface tensors with the scaled versions $(s! \omega_{s+1})^{-1}\Phi_{n-1}^s$ of the surface tensors. The two versions of the algorithm are theoretically equivalent.
\end{remark}

\subsection{Consistency of the reconstruction algorithm}
The output of the algorithm described in the previous section is a polytope with surface tensors identical to the surface tensors of $K_0$ up to a given rank $s_o$. In this section, we show that for large $s_o$ the shape of the output polytope is a good approximation of the shape of $K_0$.

For each $s_o \geq 2$, let $\mK$ be an output of the algorithm based on surface tensors up to rank $s_o$. Then exist $r_{s_o}, R_{s_o} > 0$ such that $\mK, K_0 \in \K^n(r_{s_o}, R_{s_o})$ and by Theorem~\ref{thm_stability}, we obtain
\begin{equation*}
\delta^t(K_0, \mK) \leq c(n,r_{s_o}, R_{s_o}, \epsilon)s_o^{-\frac{1-\varepsilon}{4n}}
\end{equation*}
for $\varepsilon > 0$. Notice that $c$ depends on $s_o$ through $r_{s_o}$ and $R_{s_o}$, so even though the factor $s_o^{-1/(4n) + \varepsilon}$ converges to $0$ when $s_o$ increases, we do not immediately obtain the wanted consistency. To prevent the dependence of $c$ on $s_o$, we show that there exist radii $r,R > 0$ such that $K_0, \mK \in \K^n(r, R)$ for each $s_o \geq 2$.
  
Lemma~\ref{lemma_supportmeasures} yields that the surface tensor $\T[2]$ of a convex body $K \in \K^n$ determines if $K$ has nonempty interior. In Lemma~\ref{spheres}, we show that $\T[2]$ even determines the radius of a sphere contained in $K$ and the radius of a sphere containing $K$, when $K$ has nonempty interior. 
%The stability result of Theorem~\ref{thm_stability} in combination with Lemma~\ref{spheres} yield the consistency of Algorithm Surface Tensor II, see Theorem~\ref{thm_conv}.

For a convex body $K \in \K^n_n$, the coefficient matrix $\{\T[2](e_i,e_j)\}_{i,j=1}^n$ of $\T[2]$ is symmetric and positive definite, and has therefore $n$ positive eigenvalues. In the following, we let $\lambda_{min}(K) > 0$ denote the smallest of these eigenvalues. The proof of Lemma~\ref{spheres} is inspired by the proof of \cite[Lemma 4.4.6]{Gardner06}.

\begin{lemma}\label{spheres}
Let $K \in \K^n_n$ with centre of mass at the origin. Let
\begin{equation}\label{rR}
R=\frac{S(K)}{4\pi \lambda_{min}(K)} \bigg(\frac{S(K)}{\omega_n}\bigg)^{\frac{1}{n-1}} \quad \text{and} \qquad  r=\frac{2\pi \lambda_{min}(K)}{(n+1)(4R)^{n-2}}.
\end{equation}
Then $rB^n \subseteq K \subseteq RB^n$. 
\end{lemma}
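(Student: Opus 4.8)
The plan is to express the rank‑$2$ surface tensor through the second‑order moments of $S_{n-1}(K,\cdot)$ and then to squeeze the brightness function of $K$, in the spirit of \cite[Lemma~4.4.6]{Gardner06}. Since $\Phi_{n-1}^2(K)(e_i,e_j)=\frac{1}{2\,\omega_3}\int_{S^{n-1}}u_iu_j\,S_{n-1}(K,du)=\frac{1}{8\pi}\,m_{ij}(S_{n-1}(K,\cdot))$, the coefficient matrix of $\Phi_{n-1}^2(K)$ equals $\frac{1}{8\pi}M(S_{n-1}(K,\cdot))$ in the notation of Remark~\ref{rem_support}, so the Rayleigh characterisation of the smallest eigenvalue together with Remark~\ref{rem_support} gives
\[
\int_{S^{n-1}}\langle z,u\rangle^2\,S_{n-1}(K,du)\ \ge\ 8\pi\,\lambda_{min}(K)\qquad\text{for every }z\in S^{n-1}.
\]
As $\langle z,u\rangle^2\le|\langle z,u\rangle|\le 1$ for $u,z\in S^{n-1}$, Cauchy's projection formula $V_{n-1}(K\,|\,z^{\perp})=\tfrac12\int_{S^{n-1}}|\langle z,u\rangle|\,S_{n-1}(K,du)$ then yields $4\pi\lambda_{min}(K)\le V_{n-1}(K\,|\,z^{\perp})\le\tfrac12 S(K)$ for all $z\in S^{n-1}$. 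It remains to turn these two‑sided bounds on the brightness into the two ball inclusions; since the centre of mass of $K$ is the origin, $0\in\operatorname{int}K$, and it suffices to find radii $r$ and $R$ of balls \emph{about the origin}.

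For the circumscribed ball I would bound the diameter $d:=\operatorname{diam}K$ (note $\max_{x\in K}|x|\le d$ because $0\in K$, so $K\subseteq dB^n$). The key — and hardest — point is the elementary inequality $V_n(K)\ge\frac{d}{n}\,V_{n-1}(K\,|\,e^{\perp})$ for a direction $e$ realising the diameter, which I would prove as follows. Pick $p,q\in K$ with $p-q=de$; since $p$ and $q$ have the same projection $m$ onto $e^{\perp}$, the segment $[q,p]$ shows that $w_K(e)=d$ and that (after shifting so that $q$ sits at height $0$) the point $m$ belongs to every slice $K_t:=K\cap(e^{\perp}+te)$, $t\in[0,d]$. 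Writing the shadow $K\,|\,e^{\perp}$ in polar coordinates about $m$ with radial function $\rho(\cdot)$, and choosing for each $\theta\in S^{n-2}$ a height $t(\theta)$ with $m+\rho(\theta)\theta+t(\theta)e\in K$, the triangle $\operatorname{conv}\{q,\,p,\,m+\rho(\theta)\theta+t(\theta)e\}\subseteq K$ forces the slice $K_s$ (translated to $e^{\perp}$) to contain, in direction $\theta$ from $m$, a segment of length at least $\rho(\theta)\,g(s,\theta)$, where $g(\cdot,\theta)$ is the tent function equal to $s/t(\theta)$ on $[0,t(\theta)]$ and $(d-s)/(d-t(\theta))$ on $[t(\theta),d]$; since $\int_0^d g(s,\theta)^{n-1}\,ds=d/n$, integrating $V_{n-1}(K_s)=\frac{1}{n-1}\int_{S^{n-2}}\rho_s(\theta)^{n-1}\,d\theta$ over $s$ gives the claim. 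Combining $V_n(K)\ge\frac{d}{n}V_{n-1}(K\,|\,e^{\perp})\ge\frac{4\pi d}{n}\lambda_{min}(K)$ with the isoperimetric inequality $V_n(K)\le\kappa_n\bigl(S(K)/\omega_n\bigr)^{n/(n-1)}$ and $n\kappa_n=\omega_n$ then gives $d\le\frac{S(K)}{4\pi\lambda_{min}(K)}\bigl(S(K)/\omega_n\bigr)^{1/(n-1)}=R$, hence $K\subseteq RB^n$; pleasantly, this constant comes out exactly.

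For the inscribed ball I would use the upper brightness bound. Let $v$ realise the minimal width $w:=\min_{v'}w_K(v')$ and take $z\perp v$; then $K\,|\,z^{\perp}$ lies in the slab of $z^{\perp}$ of width $w_K(v)=w$ orthogonal to $v$, intersected with a ball of radius at most $\operatorname{diam}K\le 2R$, so slicing that intersection orthogonally to $v$ gives $4\pi\lambda_{min}(K)\le V_{n-1}(K\,|\,z^{\perp})\le w\,\kappa_{n-2}(2R)^{n-2}\le w\,(4R)^{n-2}$, whence $w\ge 4\pi\lambda_{min}(K)/(4R)^{n-2}$. Finally, since the origin is the centre of mass one has $-\tfrac1n K\subseteq K$, so $h_K(u)\ge\tfrac{1}{n+1}w_K(u)\ge\tfrac{w}{n+1}$ for every $u\in S^{n-1}$; hence $\min_u h_K(u)\ge\tfrac{w}{n+1}\ge\tfrac{4\pi\lambda_{min}(K)}{(n+1)(4R)^{n-2}}\ge r$, and $rB^n\subseteq K$ because $h_{rB^n}\equiv r\le h_K$. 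I expect the main obstacle to be the volume inequality $V_n(K)\ge\frac dn V_{n-1}(K\,|\,e^{\perp})$ in the second paragraph — it is where the diameter direction, as opposed to an arbitrary one, is genuinely needed — together with the routine but slightly fiddly chasing of constants required to recover precisely the $R$ and $r$ of the statement.
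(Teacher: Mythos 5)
Your proof is correct and yields exactly the stated constants (with a factor $2$ to spare in the inner radius, since your chain ends with $h_K\ge 4\pi\lambda_{min}(K)/\bigl((n+1)(4R)^{n-2}\bigr)=2r$), but it takes a partly different route from the paper. The shared ingredients are the lower brightness bound $V_{n-1}(K\,|\,z^{\perp})\ge\tfrac12\int_{S^{n-1}}\langle z,u\rangle^2\,S_{n-1}(K,du)\ge 4\pi\lambda_{min}(K)$, the isoperimetric inequality, and the centroid inequality $h_K\ge w_K/(n+1)$ (the paper cites Schneider for it; your derivation from $-\tfrac1n K\subseteq K$ is the same classical fact and should carry a reference). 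For the outer ball the paper avoids the diameter altogether: for a boundary point $x$ it uses monotonicity of mixed volumes, $\lVert x\rVert\,V(K,n-1;[o,x/\lVert x\rVert])=V(K,n-1;[o,x])\le V_n(K)$, together with $nV(K,n-1;[o,v])=V_{n-1}(K|v^{\perp})$ (valid because $S_{n-1}(K,\cdot)$ has vanishing first moments), giving $\lVert x\rVert\le R$ directly. Your tent-function slicing proof of $V_n(K)\ge\tfrac dn V_{n-1}(K|e^{\perp})$ for a diameter direction $e$ is correct and self-contained, but note it is the same mixed-volume inequality applied to the segment $[q,p]\subseteq K$ (translation invariance in one argument, Cauchy's projection formula, vanishing first moments), so your ``hardest point'' can be replaced by one line; what the elementary argument buys is independence from mixed-volume machinery, at the price of a small measurability fuss about $\theta\mapsto t(\theta)$ (fix it by taking the minimal admissible height, or by integrating in $s$ for each fixed $\theta$ first). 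For the inner ball the paper passes to the difference body $K+(-K)$, its inradius, contact points and a bounding box, whereas you bound the minimal width $w$ of $K$ itself by enclosing the shadow $K|z^{\perp}$, $z\perp v$, in a slab of width $w$ intersected with a ball of radius at most $2R$ and using $\kappa_{n-2}\le 2^{n-2}$; this is equally valid, a little more direct, and is the source of your spare factor $2$. Two cosmetic slips: the inscribed-ball step actually uses the lower brightness bound plus the slab containment, not the announced ``upper brightness bound'' $S(K)/2$ (which is never needed), and since you already have $\operatorname{diam}K\le R$, the radius $2R$ there is mere generosity.
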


\begin{proof}
Let $x$ be a point on the boundary $\partial K$ of $K$. Then $\norm[x] > 0$, so $v = \tfrac{x}{\norm[x]} \in S^{n-1}$ is well-defined. By monotonicity and positive multilinearity of mixed volumes (see, e.g., \cite[(A.16),(A.18)]{Gardner06}) and the isoperimetric inequality (see, e.g., \cite[(B.14)]{Gardner06}), we obtain that
\begin{equation}\label{eqxMixedV}
\norm[x]V(K, n-1; [o,v]) = V(K, n-1; [o,x]) 
\leq V_n(K) \leq \bigg(\frac{S(K)}{\omega_n}\bigg)^{\frac{n}{n-1}}\kappa_n, 
\end{equation}
where $V$ is the mixed volume, $V_n$ is the $n$-dimensional volume and $[a,b]$ is the convex hull of $\{a,b\} \subseteq \R^n$.
Further, we have that
\begin{align*}
V(K, n-1; [o,v])&= \frac{1}{n}\int_{S^{n-1}} h_{[0,v]}(u) \, S_{n-1}(K, du) 
\\&= \frac{1}{2n}\int_{S^{n-1}} \abs[\IP{u}{v}] \, S_{n-1}(K, du) 
\\
&\geq \frac{1}{2n}\int_{S^{n-1}} \IP{u}{v}^2 \, S_{n-1}(K, du)
= \frac{4\pi}{n} \T[2](v,v),
\end{align*}
where we have used \cite[(A.11) and (A.12)]{Gardner06} and that $S_{n-1}(K, \cdot)$ has centroid at the origin.
Hence, 
\begin{equation}\label{eqMixedV}
V(K, n-1; [o,v]) \geq \frac{4 \pi}{n} \lambda_{min}(K).
\end{equation}
Equations \eqref{eqxMixedV} and \eqref{eqMixedV} yield that $\norm[x] \leq R$, so $K \subseteq RB^n$. 

As the centre of mass of $K$ is at the origin, then \cite[p. 320, note 6]{Schneider14} and the references given there yield that
\begin{equation*}
\frac{1}{n+1} w(K,u) \leq h_K(u)
\end{equation*}
for $u \in S^{n-1}$, where $w(K, \cdot)$ is the width function of $K$. Since 
\begin{equation*}
w(K,u)=h_K(u)+h_K(-u)=h_{K_s}(u)
\end{equation*}
where $K_s=K + (-K)$, it is sufficient to show that $r(n+1)B^n \subseteq K_s$ in order to obtain that $rB^n \subseteq K$. Due to origin-symmetry of $K_s$, we can proceed as in the proof of \cite[Lemma 4.4.6]{Gardner06}. Let $c = \sup\{a > 0 \mid aB^n \subseteq K_s\} > 0$. Then $cB^n \subseteq K_s$ and $\partial K_s \cap \partial cB^n \neq \emptyset$. As $K_s$ and $cB^n$ are origin-symmetric there are contact points $z, -z \in \partial K_s \cap \partial cB^n$ and common parallel supporting hyperplanes of $K_s$ and $cB^n$ in $z$ and $-z$. By the first part of this proof, we have $K_s \subseteq 2RB^n$, so $K_s$ is contained in a $n$-dimensional box with one edge of length $2c$ parallel to $z$ and $n-1$ edges of length $4R$ orthogonal to $z$. More precisely, 
\begin{equation*}
K_s \subseteq \{x \in \R^n \mid \abs[\IP{x}{z}] \leq c \} \cap \bigcap_{j=2}^n \{x \in \R^n \mid \abs[\IP{x}{u_j}] \leq 2R\}
\end{equation*}
where $u_2, \dots, u_n \in S^{n-1}$ and $z$ form an orthogonal basis of $\R^n$. This implies that 
\begin{equation}\label{eqProj}
V_{n-1}(K_s \mid (u_2)^{\perp}) \leq 2c (4R)^{n-2},
\end{equation} 
where $K_s \mid (u_2)^{\perp}$ is the orthogonal projection of $K_s$ onto $(u_2)^{\perp}$. Using \cite[(A.37)]{Gardner06} and that Equation~\eqref{eqMixedV} holds for any $v \in S^{n-1}$, we obtain
\begin{align*}
V_{n-1}(K_s \mid (u_2)^{\perp}) &\geq V_{n-1}(K \mid (u_2)^{\perp}) 
\\
&= nV(K,n-1; [o,u_2]) \geq 4 \pi \lambda_{min}(K),
\end{align*}
so from \eqref{eqProj} it follows that
\begin{equation*}
c \geq \frac{2\pi \lambda_{min}(K)}{(4R)^{n-2}},
\end{equation*}
which yields that $r(n+1)B^n \subseteq K_s$.
\end{proof}

\begin{theorem}\label{thm_conv}
Let $K_0 \in \K^n_n$, $s_o \geq 2$ be a natural number and $0 < \varepsilon < 1$. If the surface tensors up to rank $s_o$ of a convex body $K_{s_o}$ coincide with the surface tensors of $K_0$, then
\begin{equation}\label{cons_bound}
\delta^t(K_0,K_{s_o}) \leq c(n,\varepsilon, \Tnul[2])s_o^{-\frac{1- \varepsilon}{4n}},
\end{equation}
where $c>0$ is a constant depending only on $n, \varepsilon$ and $\Tnul[2]$. Hence, if $(K_{s_o})_{s_o \in \N}$ is a sequence of convex bodies satisfying $\Tnul=\Phi_{n-1}^s(K_{s_o})$ for $0 \leq s \leq s_o$, then the shape of $K_{s_o}$ converges to the shape of $K_0$ when $s_o \to \infty$.
\end{theorem}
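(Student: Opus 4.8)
The plan is to deduce the estimate by feeding the containment result of Lemma~\ref{spheres} into the stability bound of Theorem~\ref{thm_stability}; the only genuine point is that the inradius and circumradius occurring in Theorem~\ref{thm_stability} can be taken independent of $s_o$.

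First I would note that, because $s_o \geq 2$, the hypothesis forces $\Phi_{n-1}^2(K_{s_o}) = \Phi_{n-1}^2(K_0)$. Three things follow. Firstly, the coefficient matrix $\{\Phi_{n-1}^2(K_{s_o})(e_i,e_j)\}_{i,j=1}^n$ coincides with that of $K_0$ and is hence positive definite, so Remark~\ref{rem_support} (equivalently Lemma~\ref{lemma_supportmeasures} \eqref{item1support}) gives $K_{s_o} \in \K^n_n$, so that Lemma~\ref{spheres} is applicable to $K_{s_o}$. Secondly, taking the trace of this matrix recovers $\Phi_{n-1}^0(K_{s_o})$, hence the total surface area $S(K_{s_o})$, which therefore equals $S(K_0)$. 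Thirdly, $\lambda_{min}(K_{s_o}) = \lambda_{min}(K_0)$, being the least eigenvalue of the same matrix. Substituting these equalities into \eqref{rR} produces radii $r = r(n,\Phi_{n-1}^2(K_0))$ and $R = R(n,\Phi_{n-1}^2(K_0))$ that do not depend on $s_o$.

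Next, since $\delta^t$ is translation invariant and surface tensors are translation invariant, I would translate $K_0$ and $K_{s_o}$ so that each has centre of mass at the origin, changing neither side of \eqref{cons_bound}. Lemma~\ref{spheres} then yields $rB^n \subseteq K_0 \subseteq RB^n$ and $rB^n \subseteq K_{s_o} \subseteq RB^n$ with the same $s_o$-free $r, R$, so $K_0, K_{s_o} \in \K^n(r,R)$. Applying Theorem~\ref{thm_stability} to the pair $K_0, K_{s_o}$ gives
\[
\delta^t(K_0, K_{s_o}) \leq c(n,r,R,\varepsilon)\, s_o^{-\frac{1-\varepsilon}{4n}},
\]
and folding the dependence of $r$ and $R$ on $\Phi_{n-1}^2(K_0)$ into the constant yields \eqref{cons_bound}.

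For the final assertion, since $0 < \varepsilon < 1$ the exponent $-\tfrac{1-\varepsilon}{4n}$ is negative, so $s_o^{-\frac{1-\varepsilon}{4n}} \to 0$ and hence $\delta^t(K_0, K_{s_o}) \to 0$ as $s_o \to \infty$; because $\delta^t$ is a metric on the set of shapes of convex bodies, this is precisely the statement that the shape of $K_{s_o}$ converges to the shape of $K_0$. I do not expect a real obstacle here: everything is assembled from results already established, and the one step needing care — which is also the conceptual heart of the theorem — is the $s_o$-uniform control of both radii through the single tensor $\Phi_{n-1}^2(K_0)$, exactly what Lemma~\ref{spheres} provides.
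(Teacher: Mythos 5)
Your proposal is correct and follows essentially the same route as the paper's proof: both deduce $s_o$-independent radii $r,R$ from $\Phi_{n-1}^2(K_0)=\Phi_{n-1}^2(K_{s_o})$ via Lemma~\ref{lemma_supportmeasures} and Lemma~\ref{spheres}, then apply Theorem~\ref{thm_stability} together with translation invariance of $\delta^t$. Your additional remarks (nonempty interior of $K_{s_o}$, recovery of $S(K_{s_o})$ and $\lambda_{min}(K_{s_o})$ from the rank-two tensor) merely spell out details the paper leaves implicit.
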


\begin{proof} 
When defined as in \eqref{rR} with $K$ replaced by $K_0$, the radii $r$ and $R$ are determined by $\Tnul[2]$, and since $\Tnul[2]=\Phi_{n-1}^{2}(K_{s_o})$, Lemma~\ref{lemma_supportmeasures} and Lemma~\ref{spheres} yield that $K_{s_o} + x_{s_o}, K_0 + x_0 \in \K^n(r,R)$ for suitable $x_{s_o},x_0 \in \R^n$. Then, using translation invariance of $\delta^t$, we obtain the bound~\eqref{cons_bound} from Theorem~\ref{thm_stability}. Now, the constant $c$ does not depend on $s_o$, so the stated convergence result is obtained from $\eqref{cons_bound}$.
\end{proof}

The consistency of Algorithm Surface Tensor ($n$-dim) follows from Theorem~\ref{thm_conv}.

\subsection{Examples: Reconstruction of convex bodies in $\R^3$}
In this section, we give two examples where Algorithm Surface Tensor is used to reconstruct the shape of a convex body in $\R^3$. Following Remark~\ref{rem_robust}, the scaled surface tensors $s! \omega_{s+1} \Phi_2^s$ have been used in order to make the reconstructions more robust to numerical errors. In the first example, the ellipsoid in Figure~\ref{Ellipsoid} is reconstructed. The reconstructions of the ellipsoid are based on surface tensors up to rank $s_o=2,4,6$, see Figure~\ref{EllipsoidRecon}. In the second example, the pyramid displayed in Figure~\ref{Pyramid} is reconstructed. The reconstructions of the pyramid are executed with $s_o=2,3,4$, see Figure~\ref{PyramidRecon}. 

\begin{figure}
\centering
\begin{minipage}[c]{0.47 \textwidth}
\centering
\includegraphics[width=4cm]{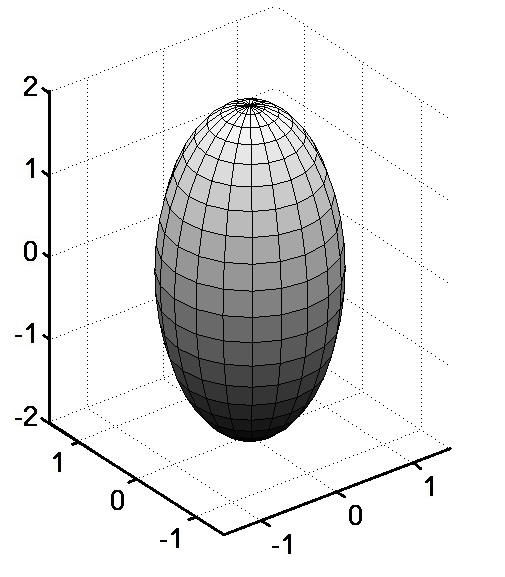}
\caption{Ellipsoid}
\label{Ellipsoid}
\end{minipage}
\hfill
\begin{minipage}[c]{0.47 \textwidth}
\centering
\includegraphics[width=5cm]{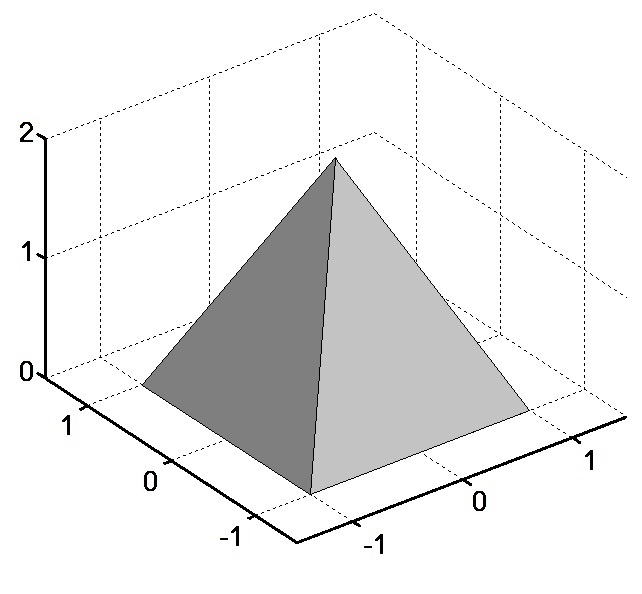}
\caption{Pyramid}
\label{Pyramid}
\end{minipage}
\end{figure}

The minimization problem \eqref{minimization} is solved by means of the \textit{fmincon} procedure provided by MatLab, and a polytope corresponding to the solution to \eqref{minimization} is reconstructed using Algorithm MinkData. This algorithm has been implemented by Gardner and Milanfar for $n \leq 3$, see \cite[Sec. A4]{Gardner06}, and for $n=3$ the algorithm has recently become available on the website www.geometrictomography.com run by Richard Gardner.

The surface tensor of rank $2$ of a convex body contains information of the main directions and the degree of anisotropy of the convex body. The effect of this is, in particular, visible in the plots in Figure~\ref{EllipsoidRecon} that show that the three reconstructions of the ellipsoid are elongated in the direction of the third axis.
As expected, the reconstructions of the ellipsoid and the reconstructions of the pyramid become more accurate when $s_o$ increases. The pyramid has $5$ facets, so according to Theorem~\ref{Thm_uniqueness}, the surface tensors up to rank $4$ uniquely determine the shape of the pyramid. The last plot in Figure~\ref{PyramidRecon} shows that the reconstruction based on surface tensors up to rank $4$ is indeed very precise. Deviation from the pyramid can be ascribed to numerical errors. 

\begin{figure}
\centering
\includegraphics[width=3.3cm]{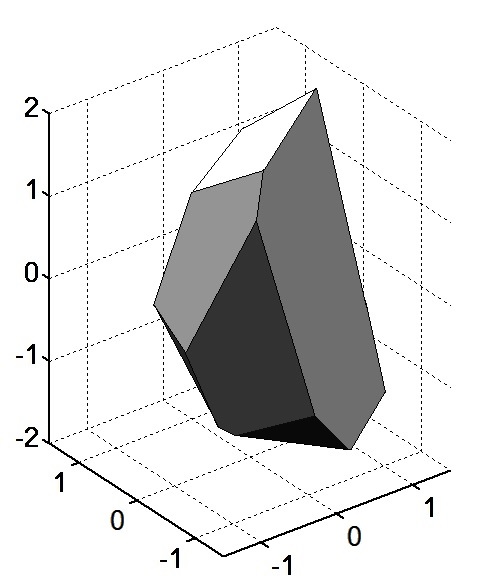}
\hspace*{0.7cm}
\includegraphics[width=3.3cm]{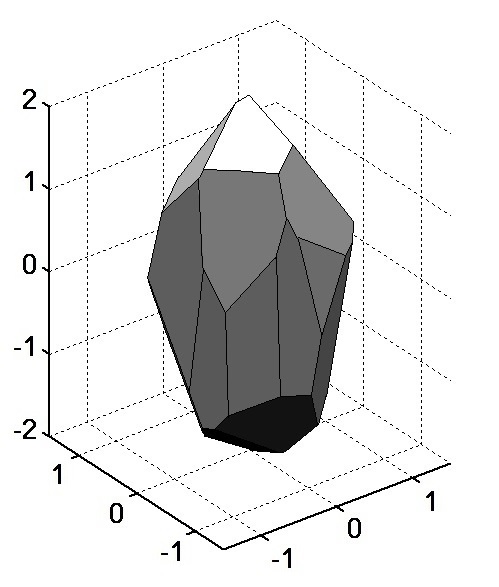}
\hspace*{0.7cm}
\includegraphics[width=3.3cm]{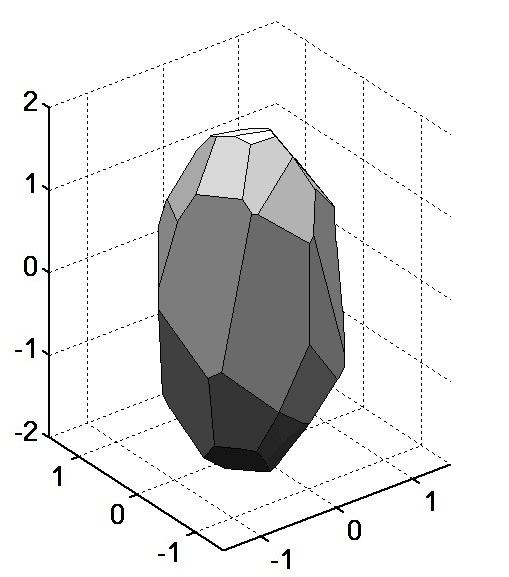}
\caption{Reconstructions of the ellipsoid in Figure~\ref{Ellipsoid} based on surface tensors up to rank $s_o=2,4,6$.}
\label{EllipsoidRecon}
\end{figure}

\begin{figure}
\centering
\includegraphics[width=3.8cm]{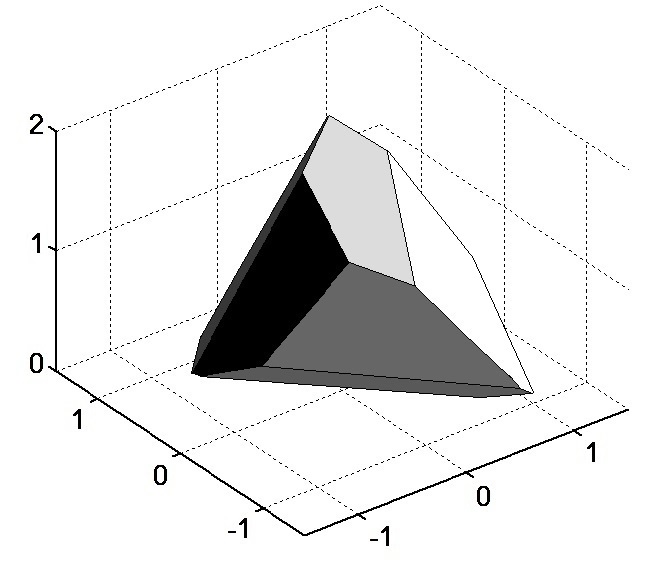}
\hspace*{0.7cm}
\includegraphics[width=3.8cm]{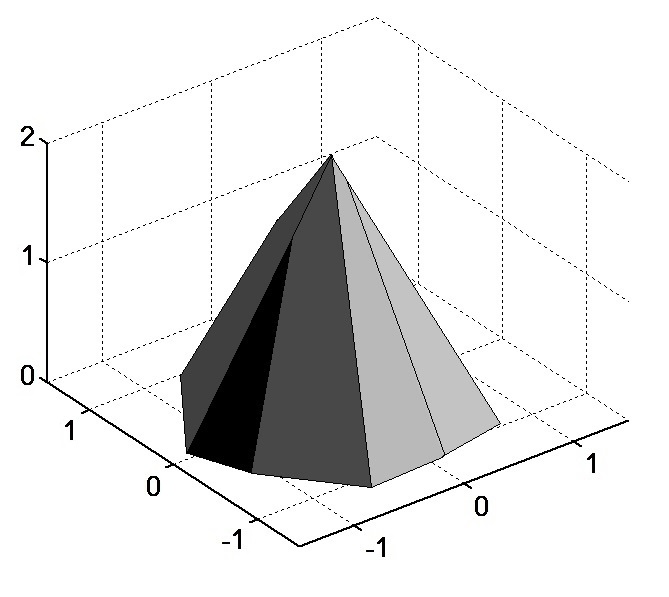}
\hspace*{0.7cm}
\includegraphics[width=3.8cm]{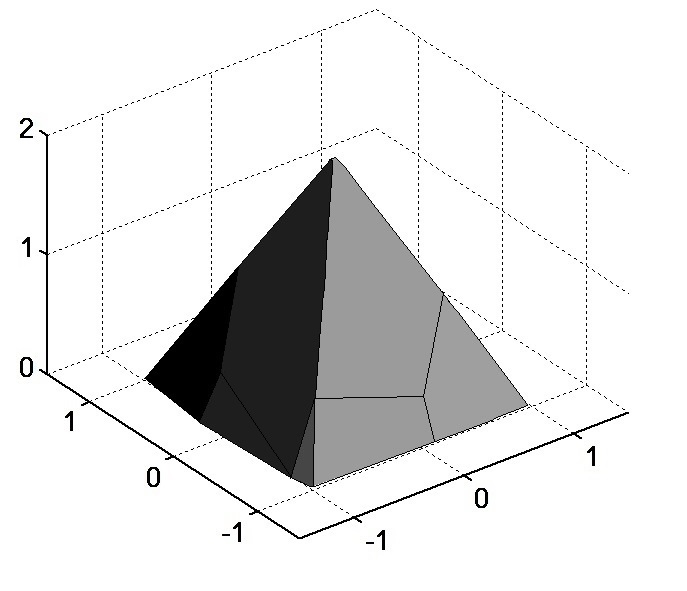}
\caption{Reconstructions of the pyramid in Figure~\ref{Pyramid} based on surface tensors up to rank $s_o=2,3,4$.}
\label{PyramidRecon}
\end{figure}

\section{Reconstruction of shape from harmonic intrinsic volumes}\label{Sec_ReconHVol}
Due to the correspondence between surface tensors and harmonic intrinsic volumes, a convex body $K \in \K_n^n$ is uniquely determined by the set of harmonic intrinsic volumes $\{\psi_{(n-1)sj}(K) \mid s \in \N_0, \, j=1, \dots, N(n,s) \}$ of $K$. In this section, we derive an algorithm that approximates the shape of an unknown convex body $K_0 \in \K^n_n$ from measurements subject to noise of a finite number of harmonic intrinsic volumes of $K_0$. The reconstruction algorithm we derive is a generalization to an $n$-dimensional setting of Algorithm Harmonic Intrinsic Volume LSQ described in \cite{Kousholt2016}.

\subsection{Reconstruction algorithm based on measurements of harmonic intrinsic volumes}\label{subsec_reconHIV}

Let $K_0 \in \K^n_n$ be an unknown convex body where measurements of the harmonic intrinsic volumes of $K_0$ are available up to degree $s_o \geq 2$. Due to noise, the measurements are of the form
$\HV(K_0) + \epsilon_{s_o}$, where $\epsilon_{s_o}$ is an $m_{s_o}$-dimensional vector of random variables with zero mean and finite variance. As the harmonic intrinsic volumes of degree $1$ of $K_0$ are known to vanish, these should not be measured, so we let the corresponding noise variables be $0$.

In Section~\ref{Sec_Recon}, the exact surface tensors of $K_0$ was known. In that situation, we constructed a convex body with the same surface tensors as $K_0$. In this section, only \emph{noisy measurements} of the harmonic intrinsic volumes are available, and it is typically no longer possible to construct a convex body with the exact same harmonic intrinsic volumes as $K_0$. Instead, the aim is to construct a convex body $\hat{K}_{s_o}^H \in \K^n$ such that the harmonic intrinsic volumes of $\hat{K}_{s_o}^H$ fit the measurements $\HV(K_0) + \epsilon_{s_o}$ of the harmonic intrinsic volumes of $K_0$ in a least squares sense. Hence, $\hat{K}_{s_o}^H$ should minimize the mapping $D_{s_o} \colon \K^n \to [0, \infty)$ defined as  
\begin{equation*}
D_{s_o}(K) =  \norm[\HV(K_0) + \epsilon_{s_o} - \HV(K)]^2
\end{equation*}
for $K \in \K^n$. In the $2$-dimensional setup, \cite[Lemma 6.1]{Kousholt2016} yields the existence of a convex body that minimizes $D_{s_o}$. In the $n$-dimensional setting, however, the existence of such a convex body can not be ensured. This existence problem is overcome by extending the domain of $D_{s_o}$ such that the mapping attains its infimum. This extension  prevents the existence problem and thus establishes a natural framework for reconstruction in the $n$-dimensional setting.

First notice that $D_{s_o}(K)$ only depends on $K \in \K^n$ through $S_{n-1}(K, \cdot)$, so a version $\check{D}_{s_o}$ of $D_{s_o}$ can be defined on the set $\{S_{n-1}(K, \cdot) \mid K \in \K^n\}$ letting $\check{D}_{s_o}(S_{n-1}(K, \cdot)) = D_{s_o}(K)$ for $K \in \K^n$. In the weak topology, the closure of $\{S_{n-1}(K, \cdot) \mid K \in \K^n\} \subseteq \M$ is the set 
\begin{equation*}
\M_0 =\biggl\{\mu \in \M \biggm \vert \int_{S^{n-1}} u \,  \mu (du) = 0 \biggr\},
\end{equation*}
and the domain of $\check{D}_{s_o}$ is extended to $\M_0$ by defining
\begin{equation*}
\check{D}_{s_o}(\mu) =\norm[\HV(K_0) + \epsilon_{s_o} - \HV(\mu)]^2
\end{equation*}
for $\mu \in \M_0$. 
Then
\begin{equation}\label{infinf}
\inf_{K \in \K^n} D_{s_o}(K) = \inf_{\mu \in \M_0} \check{D}_{s_o}(\mu)
\end{equation}
since $\check{D}_{s_o}$ is continuous on $\M_0$.

The infimum of $\check{D}_{s_o}$ is attained on $\M_0$, and in addition, it can be shown that $\check{D}_{s_o}$ is minimized by a measure in $\M_{m_{s_o}}$, where
\begin{equation*}
\M_k = \biggl\{ \mu \in \M_0 \mid \mu = \sum_{j=1}^k \alpha_j \delta_{u_j}, \, \alpha_j \geq 0, \,  u_j \in S^{n-1} \biggr \}
\end{equation*}
for $k \in \N$. This is the content of the following Lemmas~\ref{lemma_measure_existence} and \ref{lemma_infD}. Due to the close connection between $D_{s_o}$ and $\check{D}_{s_o}$, we write $D_{s_o}$ for both versions of the mapping. 

\begin{lemma}\label{lemma_measure_existence}
Let $\mu \in \M_0$ and $s \in \N_0$. Then there exist a measure $\mu_s \in \M_{m_s}$ such that $\mu$ and $\mu_s$ have identical moments up to order $s$.
\end{lemma}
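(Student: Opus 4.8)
The plan is to realize $\mu_s$ via the classical moment-space/Carath\'eodory argument, identifying moments up to order $s$ with a single point in a finite-dimensional cone and then expressing that point as a finite positive combination of extreme points, each of which is the moment vector of a Dirac mass.

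\textbf{Step 1: Reformulate in terms of a moment map.} Fix an orthonormal basis of spherical harmonics $H_{nkj}$, $0 \le k \le s$, $1 \le j \le N(n,k)$; there are $m_s$ of them in total. Define $T \colon S^{n-1} \to \R^{m_s}$ by $T(u) = (H_{nkj}(u))_{k \le s,\, j}$, and let $C = \operatorname{cl}\operatorname{conv}\{\,\lambda\,T(u) \mid \lambda \ge 0,\ u \in S^{n-1}\,\}$ be the closed convex cone generated by the image. For $\mu \in \M_0$ the vector $\int_{S^{n-1}} T(u)\,\mu(du)$ is exactly $\HV[s](\mu)$, and two measures in $\M$ have identical moments up to order $s$ precisely when their images under $T$-integration coincide (the span of the $H_{nkj}$, $k \le s$, is the space of polynomials of degree $\le s$ restricted to $S^{n-1}$, which is what "moments up to order $s$" refers to). So it suffices to show that $\HV[s](\mu) \in C$ can be written as $\sum_{j=1}^{m_s} \alpha_j T(u_j)$ with $\alpha_j \ge 0$ and $u_j \in S^{n-1}$, \emph{and} that the resulting measure $\mu_s = \sum_j \alpha_j \delta_{u_j}$ additionally lies in $\M_0$, i.e. has vanishing first moments.

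\textbf{Step 2: Membership in the cone and Carath\'eodory.} Since $S^{n-1}$ is compact and $T$ is continuous, the set $\{\lambda T(u) \mid \lambda \in [0,1],\ u \in S^{n-1}\}$ is compact, hence its convex hull $C'$ is compact (Carath\'eodory plus compactness) and $C = \bigcup_{t \ge 0} tC'$ is already closed; in particular $\HV[s](\mu)$, being an average of points $\mu(S^{n-1})\,T(u)$, lies in $C$. Now $C$ is a closed convex cone in $\R^{m_s}$, so by the conic version of Carath\'eodory's theorem any point of $C$ is a nonnegative combination of at most $m_s$ of the generators $T(u)$. Applying this to $\HV[s](\mu)$ gives $\HV[s](\mu) = \sum_{j=1}^{m_s} \alpha_j T(u_j)$ with $\alpha_j \ge 0$, $u_j \in S^{n-1}$. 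Setting $\mu_s = \sum_{j=1}^{m_s}\alpha_j \delta_{u_j} \in \M_{m_s}$ (allowing some $\alpha_j = 0$), the measures $\mu$ and $\mu_s$ have the same integrals against every $H_{nkj}$ with $k \le s$, hence the same moments up to order $s$.

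\textbf{Step 3: Checking $\mu_s \in \M_0$.} The one remaining point is that $\mu_s$ must have vanishing first-order moments so that it genuinely lies in $\M_{m_s} \subseteq \M_0$, not merely in $\M_{m_s}$'s ambient cone. But the coordinate functions $u \mapsto u_i$ are spherical harmonics of degree $1$, hence among the $H_{n1j}$; since $\mu \in \M_0$ its degree-$1$ moments vanish, and $\mu_s$ shares all moments of $\mu$ up to order $s \ge 1$, so the degree-$1$ moments of $\mu_s$ vanish too. (If $s = 0$ the statement is about moments up to order $0$ only, and one simply takes $\mu_s$ to be a suitable positive multiple of a single $\delta_u$ or, to stay in $\M_0$, the symmetric combination $\tfrac12\mu(S^{n-1})(\delta_u+\delta_{-u})$; this matches the $0$-th moment and trivially has zero first moments.)

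\textbf{Main obstacle.} The only genuinely delicate point is the closedness of the cone $C$ — one must not apply Carath\'eodory naively to an arbitrary (possibly non-closed) convex cone. Compactness of $S^{n-1}$ makes this routine as sketched above, but it is the step that actually uses the geometry of $S^{n-1}$ rather than pure linear algebra; everything else is bookkeeping about which functions are degree-$\le s$ polynomials and the translation between "moments" and "harmonic intrinsic volumes."
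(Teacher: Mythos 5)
Your proof is correct and is essentially the argument the paper intends: the paper simply defers to the proof of \cite[Thm. 4.1]{Kousholt2016}, which is the same finite-dimensional moment-map/Carath\'eodory construction of a discrete measure with at most $m_s$ atoms, combined with the observation that the degree-one moments (and hence membership in $\M_0$) are inherited because they are among the matched moments. The closedness issue you single out as the main obstacle is actually moot: since $T(S^{n-1})$ is compact, the normalized barycenter $\mu(S^{n-1})^{-1}\int_{S^{n-1}} T\,d\mu$ already lies in $\operatorname{conv}(T(S^{n-1}))$ itself, so conic Carath\'eodory applies without passing to closures (and in any case $0\notin\operatorname{conv}(T(S^{n-1}))$, since the degree-zero coordinate of every $T(u)$ is the positive constant $\omega_n^{-1/2}$, so the generated cone is closed anyway).
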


The proof of Lemma~\ref{lemma_measure_existence} follows the lines of the proof of \cite[Thm. 4.1]{Kousholt2016}. The result also holds if $\M_0$ and $\M_{m_s}$ are replaced by the larger sets $\M$ and $\{ \mu \in \M \mid \mu = \sum_{j=1}^k \alpha_j \delta_{u_j}, \, \alpha_j \geq 0, \,  u_j \in S^{n-1}\}$.

\begin{lemma}\label{lemma_infD}
There exists a measure $\mu_{s_o} \in \M_{m_{s_o}}$ such that
\begin{equation}\label{Inf_D}
D_{s_o}(\mu_{s_o}) = \inf_{\mu \in \M_0} D_{s_o}(\mu).
\end{equation}
If $\mu_1, \mu_2 \in \M_0$ minimize $D_{s_o}$, then $\mu_1$ and $\mu_2$ have identical moments up to order $s_o$.
\end{lemma}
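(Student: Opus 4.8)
The plan is to reduce the minimization of $D_{s_o}$ over the cone $\M_0$ to minimizing a continuous function over a compact finite-dimensional set, and then to read off the moment uniqueness from strict convexity of the squared Euclidean norm.

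Since $D_{s_o}(\mu)$ depends on $\mu\in\M_0$ only through the vector $\HV(\mu)$, hence only through the moments of $\mu$ up to order $s_o$, Lemma~\ref{lemma_measure_existence} gives $\inf_{\mu\in\M_0}D_{s_o}(\mu)=\inf_{\mu\in\M_{m_{s_o}}}D_{s_o}(\mu)$, and any minimizer found in $\M_{m_{s_o}}$ is automatically a minimizer over all of $\M_0$. I would therefore work on $\M_{m_{s_o}}$, parametrized by $\Ms$ as in \eqref{def_Ms}: a pair $(\alpha,\textbf{u})\in\Ms$ corresponds to $\mu=\sum_{j=1}^{m_{s_o}}\alpha_j\delta_{u_j}\in\M_{m_{s_o}}$, and $(\alpha,\textbf{u})\mapsto D_{s_o}(\mu)$ is continuous because the moments of $\mu$ up to order $s_o$ (hence $\HV(\mu)$, a fixed linear image of them) are polynomials in $(\alpha,\textbf{u})$ and $D_{s_o}$ is a squared norm.

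The set $\Ms$ is closed but unbounded in the $\alpha$-variable, so the main point is to pass to a compact subset without losing the infimum. For this I would compare with the zero measure $0\in\M_0$, whose moments all vanish, giving the finite bound $\inf_{\mu\in\M_0}D_{s_o}(\mu)\le D_{s_o}(0)=\norm[\HV(K_0)+\epsilon_{s_o}]^2=:c_0$. If $D_{s_o}(\mu)\le c_0$, then $\norm[\HV(\mu)]\le 2\sqrt{c_0}$ by the triangle inequality; in particular the degree-$0$ entry of $\HV(\mu)$, which equals a fixed positive multiple of $\mu(S^{n-1})$, is bounded, so $\mu(S^{n-1})=\sum_j\alpha_j\le M$ for an explicit constant $M$. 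Contrapositively, $\sum_j\alpha_j>M$ forces $D_{s_o}(\mu)>c_0$, so the infimum over $\Ms$ equals the infimum over $C=\{(\alpha,\textbf{u})\in\Ms:\sum_j\alpha_j\le M\}$, a nonempty (it contains $(0,\textbf{u})$) closed and bounded, hence compact, subset of $[0,M]^{m_{s_o}}\times(S^{n-1})^{m_{s_o}}$. On $C$ the continuous function $D_{s_o}$ attains its minimum at some $(\alpha^\ast,\textbf{u}^\ast)$, and the associated $\mu_{s_o}\in\M_{m_{s_o}}$ satisfies $D_{s_o}(\mu_{s_o})=\inf_{\mu\in\M_0}D_{s_o}(\mu)$ by the reduction in the previous paragraph. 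I expect this compactness reduction — in particular the passage between $\M_0$ and $\M_{m_{s_o}}$ via Lemma~\ref{lemma_measure_existence} and the uniform mass bound — to be the only delicate step; the continuity and compactness verifications are routine.

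For the second assertion, suppose $\mu_1,\mu_2\in\M_0$ both attain the minimum value $d$. Since $\M_0$ is convex and $\HV$ is linear, $\tfrac12(\mu_1+\mu_2)\in\M_0$ with $\HV\big(\tfrac12(\mu_1+\mu_2)\big)=\tfrac12\big(\HV(\mu_1)+\HV(\mu_2)\big)$, and the parallelogram identity applied to the vector $v=\HV(K_0)+\epsilon_{s_o}$ yields
\begin{align*}
D_{s_o}\big(\tfrac12(\mu_1+\mu_2)\big)
&=\tfrac12 D_{s_o}(\mu_1)+\tfrac12 D_{s_o}(\mu_2)-\tfrac14\norm[\HV(\mu_1)-\HV(\mu_2)]^2\\
&=d-\tfrac14\norm[\HV(\mu_1)-\HV(\mu_2)]^2.
\end{align*}
As the left-hand side is at least $d$, we conclude $\HV(\mu_1)=\HV(\mu_2)$, i.e. $\mu_1$ and $\mu_2$ have the same harmonic intrinsic volumes up to degree $s_o$, equivalently the same moments up to order $s_o$.
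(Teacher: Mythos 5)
Your argument is correct, and it reaches the lemma by a genuinely different route than the paper. The paper minimizes over the image set $H=\{\HV(\mu)\mid\mu\in\M_0\}\subseteq\R^{m_{s_o}}$: it proves $H$ is closed (bounding the total masses along a convergent sequence of moment vectors via $\mu(S^{n-1})=\sqrt{\omega_n}\,\psi_{n01}(\mu)$ and extracting a weakly convergent subsequence in the closed set $\M_0$ by a compactness theorem for finite measures), observes that $H$ is nonempty and convex, and then invokes existence and uniqueness of the metric projection of $\HV(K_0)+\epsilon_{s_o}$ onto $H$; Lemma~\ref{lemma_measure_existence} enters only at the end, to convert the abstract minimizer into one lying in $\M_{m_{s_o}}$, and the statement about moments is read off from uniqueness of the projection. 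You instead apply Lemma~\ref{lemma_measure_existence} at the outset to pass to the finite-dimensional parameter set $\Ms$, obtain compactness by comparing with the zero measure together with the same identity $\mu(S^{n-1})=\sqrt{\omega_n}\,\psi_{n01}(\mu)$ (so near-minimizers have uniformly bounded mass), and conclude by Weierstrass; for the second assertion you give a direct parallelogram-identity argument from convexity of $\M_0$ and linearity of $\HV$, which amounts to an inline proof of uniqueness of the nearest point. What each buys: your route is more elementary, avoiding weak convergence of measures and the citation for metric projections, and it shows directly that the finite minimization problem~\eqref{mini_noise} used by the algorithm attains its infimum on a compact subset of $\Ms$; the paper's route additionally records the structural fact that $H$ is a closed convex subset of $\R^{m_{s_o}}$, obtained almost for free from standard references.
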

\begin{proof}
Let $H=\{\HV(\mu) \mid \mu \in \M_0\} \subseteq \R^{m_{s_o}}$. Then
\begin{equation*}
\inf_{\mu \in \M_0} D_{s_o}(\mu) = \inf_{x \in H} \norm[\HV(K_0) + \epsilon_{s_o} - x]^2.
\end{equation*}
Let $\{\HV[s](\mu_k)\}_{k \in \N}$ be a  convergent sequence in $H$. Then, $\sup_{k \in \N} \mu_k(S^{n-1}) < \infty$, since $\mu(S^{n-1})=\sqrt{\omega_n}\psi_{n01}(\mu)$ for $\mu \in \M_0$. Since $\M_0$ is closed, this implies that there exists a subsequence $(\mu_{k_l})_{l \in \N}$ of $(\mu_k)_{k \in \N}$ that converges weakly to a measure $\mu \in \M_0$, see \cite[Cor. 31.1]{Bauer2001}. Then $\HV(\mu_k) \to \HV(\mu)$ for $k \to \infty$ as spherical harmonics are continuous on $S^{n-1}$. Hence, $H$ is closed in $\R^{m_{s_o}}$. 
Solving the minimization problem
\begin{equation*}
\inf_{x \in H} \norm[\HV(K_0) + \epsilon_{s_o} - x]^2
\end{equation*}
corresponds to finding the metric projection of $\HV(K_0) + \epsilon_{s_o}$ on the nonempty, convex and closed set $H$. This projection always exists and is unique, see \cite[Sec. 1.2]{Schneider14}. Then the existence of a measure $\mu_{s_o} \in \M_{s_o}$ that satisfies \eqref{Inf_D} follows from Lemma~\ref{lemma_measure_existence}. The second statement of the lemma follows from the uniqueness of the projection. 
\end{proof}

%\fxnote{Er dette nok argumentation?}
%\begin{remark}
%The infimum $\inf_{K \in \K^n}D_{s_o}(K)$ is attained on $\K^n$ if and only if the infimum $\inf_{x \in \tilde{H}}\norm[\HV(K_0) + \epsilon_{s_o} - x]$ is attained on $\tilde{H}$, where $\tilde{H}=\{\HV(K) \mid K \in \K^n\}$. In contrast to $H$ in the proof of Lemma~\ref{lemma_infD}, $\tilde{H}$ is not closed. Consider, for instance, the sequence $P_k \in \K^n_n$ of polytopes in $\R^n$ with $S_{n-1}(P_k, \cdot) = \dfrac{1}{k^2}(\delta_{e_1} + \delta_{-e_1}) + \sum_{j=2}^n (\delta_{e_j} + \delta_{-e_j})$. The sequence $\{S_{n-1}(P_k, \cdot)\}_{k \in \N}$ of the corresponding surface area measures converges weakly to $\mu=\sum_{j=2}^n \delta_{e_j} + \delta_{-e_j}$. Then $\HV(P_k) \to \HV(\mu)$ for $k \to \infty$. However, as $n > 2$, $\mu$ is not a surface area measure, so the limit of the sequence $\{\HV(P_k)\}_{k \in \N}$ is not an element in $\tilde{H}$. Hence, as described above, the mapping $D_{s_o}$ is extended to $\M_0$ to ensure that its infimum is attained. 
%\end{remark}

Due to Lemma~\ref{lemma_infD} and the structure of $\M_{m_{s_o}}$, the minimization of $D_{s_o}$ can be reduced to the finite minimization problem 
\begin{equation}\label{mini_noise}
\inf_{(\alpha, \textbf{u}) \in \Ms} \sum_{s=0}^{s_o} \sum_{j=1}^{N(n,s)}\big( \psi_{(n-1)sj}(K_0) + \epsilon_{sj} - \sum_{l=1}^{m_{s_o}} \alpha_l H_{nsj}(u_l) \big)^2,
\end{equation}
where $\Ms$ is defined in \eqref{def_Ms}. A solution $(\alpha,\textbf{u}) \in \Ms$ to the minimization problem~\eqref{mini_noise} corresponds to the measure $\mu_{\alpha,\textbf{{u}}}=\sum_{j=0}^{m_{s_o}} \alpha_j \delta_{u_j} \in \M_{m_{s_o}}$. It follows from Lemma~\ref{lemma_supportmeasures} that the measure $\mu_{\alpha,\textbf{{u}}}$ is a surface area measure of a convex body in $\K^n$ if and only if $\mu_{\alpha,\textbf{{u}}}$ is of the form $a(\delta_v + \delta_{-v})$ for some $a \geq 0$ and $v \in S^{n-1}$ or if the matrix $M(\mu_{\alpha,\textbf{u}})$ of second order moments of $\mu_{\alpha,\textbf{u}}$ is positive definite. The assumption on $M(\mu_{\alpha,\textbf{u}})$ can alternatively be replaced by the assumption that $\alpha_1 u_1, \dots, \alpha_{m_{s_o}}u_{m_{s_o}}$ span $\R^n$ . 

Assume that $\mu_{\alpha,\textbf{{u}}}= a(\delta_v + \delta_{-v})$ for some $v \in S^{n-1}$ and $a \geq 0$. If $a=0$, we let $\hat{K}_{s_o}^H$ be the singleton $\{0\}$. If $a > 0$, we let $\hat{K}_{s_o}^H$ be a polytope in $u^{\perp}$ with surface area $a$. Now assume that $\alpha_1 u_1, \dots, \alpha_{m_{s_o}}u_{m_{s_o}}$ span $\R^n$. Then $\mu_{\alpha, \textbf{u}}$ is the surface area measure of a polytope with nonempty interior. We let $\hat{K}_{s_o}^H$ be the output polytope from Algorithm MinkData (see \cite[Sec. A.4]{Gardner06}) that reconstructs a polytope with surface area measure $\mu_{\alpha, \textbf{u}}$ from $(\alpha, \textbf{u})$. In all three cases, the surface area measure of  $\hat{K}_{s_o}^H$ is $\mu_{\alpha,\textbf{{u}}}$, so $\hat{K}_{s_o}^H$ minimizes $D_{s_o}$. 

As $s_o \geq 2$, it follows from  Lemma~\ref{lemma_supportmeasures} and the uniqueness statement of Lemma~\ref{lemma_infD} that if $\mu_{\alpha,\textbf{u}}$ is not a surface area measure of a convex body, then the same holds for every measure in $\M_0$ that minimizes $D_{s_o}$. Hence, the mapping $D_{s_o}$ does not attain its infimum on $\K^n$, and there does not exist a convex body with harmonic intrinsic volumes that fit the measurements $\HV(K_0) + \epsilon_{s_o}$ in a least squares sense. In this case, the reconstruction algorithm does not have an output. By Lemma~\ref{Lemma_D_bound} in Section~\ref{subsec_HIVcons}, this situation only occurs when the measurements are too noisy. The reconstruction algorithm is summarized in the following.

\paragraph{Algorithm Harmonic Intrinsic Volume LSQ ($n$-dim)}
\begin{description}
\item[Input:] Measurements $\HV(K_0) + \epsilon_{s_o}$ of the harmonic intrinsic volumes up to degree $s_o \geq 2$ of an unknown convex body $K_0 \in \K_n^n$.

\item[Task:] Construct a polytope $\hat{K}_{s_o}^H$ with at most $m_{s_o}$ facets such that the harmonic intrinsic volumes up to order $s_o$ of $\hat{K}_{s_o}^H$ fit the measurements $\HV(K_0) + \epsilon_{s_o}$ in a least squares sense.
 
\item[Action:]

Let $(a, \textbf{v})$ be a solution to the minimization problem 
\begin{equation*}
\inf_{(\alpha, \textbf{u}) \in \Ms} \sum_{s=0}^{s_o} \sum_{j=1}^{N(n,s)}\big( \psi_{(n-1)sj}(K_0) + \epsilon_{sj} - \sum_{l=1}^{m_{s_o}} \alpha_l H_{nsj}(u_l) \big)^2.
\end{equation*}

\begin{description}
\item[Case 1:] If $a=\textbf{0}$, let $\hat{K}_{s_o}^H=\{0\}$.
\item[Case 2:] If $\mu_{a, \textbf{v}}=\alpha(\delta_u + \delta_{-u})$ for some $\alpha > 0$ and $u \in S^{n-1}$, let $\hat{K}^H_{s_o}$ be a polytope in $u^{\perp}$ with surface area $\alpha$.
\item[Case 3:] If $a_1v_1, \dots, a_{m_{s_o}}v_{m_{s_o}}$ span $\R^n$, then $(a, \textbf{v})$ corresponds to the surface area measure of polytope $P \in \K^n_n$. Use Algorithm MinkData to reconstruct $P$, and let $\hat{K}_{s_o}^H=P$. 
\item[Case 4:] Otherwise, the solution $(a, \textbf{v})$ does not correspond to a surface area measure of a convex body. The algorithm has no output. 

\end{description}

\end{description}

\subsection{Consistency of the reconstruction algorithm} \label{subsec_HIVcons}
Let $(\Omega, \F, \Prob)$ be a complete probability space where the vectors of noise variables $(\epsilon_{s_o})_{s_o \geq 2}$ are defined. We assume that the noise variables are independent with zero mean and that the variance of $\epsilon_{s_oj}$ is bounded by $\sigma_{s_o}^2 > 0$ for $s_o \geq 2$ and $j=1, \dots, m_{s_o}$. In the following, for $s_o \geq 2$, we write 
$$D_{s_o}(\cdot, \epsilon_{s_o})=\norm[\HV(K_0) + \epsilon_{s_o} - \HV(\cdot)]^2$$ 
to emphasize the dependence of $D_{s_o}$ on $\epsilon_{s_o}$, and we let $r_{K_0}=\frac{r}{2}$ and $R_{K_0}=2R$, where $r$ and $R$ are defined as in \eqref{rR} with $K$ replaced by $K_0$. 

\begin{lemma}\label{Lemma_D_bound}
There exists a constant $c_{K_0} > 0$ such that any measure $\mu \in \M_0$ that minimizes $D_{s_o}(\cdot, \epsilon_{s_o})$ is the surface area measure of a convex body $K_{\mu} \in \K^n(r_{K_0}, R_{K_0})$ if $\norm[\epsilon_{s_o}] < c_{K_0}$.
\end{lemma}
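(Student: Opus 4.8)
The plan is to compare an arbitrary minimizer with the surface area measure of $K_0$ itself and to observe that only the moments of order at most $2$ enter the conclusion. First, since $S_{n-1}(K_0,\cdot)\in\M_0$, any $\mu\in\M_0$ minimizing $D_{s_o}(\cdot,\epsilon_{s_o})$ satisfies
\[
\norm[\HV(K_0)+\epsilon_{s_o}-\HV(\mu)]^2 = D_{s_o}(\mu,\epsilon_{s_o}) \le D_{s_o}(S_{n-1}(K_0,\cdot),\epsilon_{s_o}) = \norm[\epsilon_{s_o}]^2 ,
\]
so the triangle inequality gives $\norm[\HV(\mu)-\HV(K_0)]\le 2\norm[\epsilon_{s_o}]$. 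The components of $\HV(\mu)=\psi_{n-1}^{s_o}(\mu)$ of degree at most $2$ are tied to the moments of $\mu$ of order at most $2$ by a fixed invertible linear map depending only on $n$ (Section~\ref{Sec_prelim}), so I would extract $\norm[M(\mu)-M(S_{n-1}(K_0,\cdot))]\le c_1(n)\norm[\epsilon_{s_o}]$ for a constant $c_1(n)$ that does not depend on $s_o$; moreover the first order moments of $\mu$ vanish because $\mu\in\M_0$.

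Next I would use that $K_0\in\K^n_n$, so by Remark~\ref{rem_support} the matrix $M(S_{n-1}(K_0,\cdot))$ is positive definite. If $\norm[\epsilon_{s_o}]$ is small enough that $c_1(n)\norm[\epsilon_{s_o}]$ lies below half the smallest eigenvalue of $M(S_{n-1}(K_0,\cdot))$, then $M(\mu)$ is positive definite as well, and together with the vanishing first order moments Lemma~\ref{lemma_supportmeasures}\eqref{item1support} yields $\mu=S_{n-1}(K_\mu,\cdot)$ for a convex body $K_\mu\in\K^n_n$, which by translation invariance of the surface area measure may be taken with centre of mass at the origin. Furthermore $S(K_\mu)=\mu(S^{n-1})$ and $\lambda_{min}(K_\mu)$ — the latter being a fixed positive multiple of the smallest eigenvalue of $M(\mu)$ — depend continuously on the moments of $\mu$ of order at most $2$, hence converge to $S(K_0)$ and to $\lambda_{min}(K_0)>0$ as $\norm[\epsilon_{s_o}]\to 0$.

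To finish I would apply Lemma~\ref{spheres} to $K_\mu$, obtaining radii $r_\mu,R_\mu>0$ given by \eqref{rR} with $K=K_\mu$ and satisfying $r_\mu B^n\subseteq K_\mu\subseteq R_\mu B^n$. The expressions in \eqref{rR} are continuous in $(S(K),\lambda_{min}(K))$ on the region $\lambda_{min}>0$, so by the previous paragraph $R_\mu\to R$ and $r_\mu\to r$ as $\norm[\epsilon_{s_o}]\to 0$, where $r,R$ are the radii \eqref{rR} associated with $K_0$. Hence there is a constant $c_{K_0}>0$, built only from the smallest eigenvalue of $M(S_{n-1}(K_0,\cdot))$, from $S(K_0)$, $\lambda_{min}(K_0)$, $r$, $R$ and from $n$ — in particular independent of $s_o$ — such that $\norm[\epsilon_{s_o}]<c_{K_0}$ forces $r_\mu\ge r/2 = r_{K_0}$ and $R_\mu\le 2R = R_{K_0}$, whence $r_{K_0}B^n\subseteq K_\mu\subseteq R_{K_0}B^n$, i.e. $K_\mu\in\K^n(r_{K_0},R_{K_0})$. (By the uniqueness part of Lemma~\ref{lemma_infD} all minimizers share the same moments up to order $s_o\ge 2$, hence the same $M(\mu)$ and the same total mass, so a single $c_{K_0}$ serves every minimizer.)

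The point that needs genuine care — and, as in Theorem~\ref{thm_conv}, the real crux — is keeping the threshold $c_{K_0}$ independent of $s_o$, even though the functional $D_{s_o}$, its domain $\M_{m_{s_o}}$ and the minimizers all change with $s_o$. This is exactly what the reduction above secures: the comparison bound $\norm[\HV(\mu)-\HV(K_0)]\le 2\norm[\epsilon_{s_o}]$ holds for every $s_o$ with the same constant, and passing from $\HV(\mu)$ to its order-$\le2$ part costs only the dimension-dependent factor $c_1(n)$, after which the positive-definiteness margin and the continuity in \eqref{rR} depend on $K_0$ alone. I do not expect any other step to pose a real obstacle.
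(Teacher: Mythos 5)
Your proposal is correct and takes essentially the same route as the paper's proof: compare the minimizer with $S_{n-1}(K_0,\cdot)$ via minimality to get $\lVert \psi_{n-1}^{s_o}(\mu)-\psi_{n-1}^{s_o}(K_0)\rVert \le 2\lVert \epsilon_{s_o}\rVert$, pass to the second-order moments (first-order moments vanishing since $\mu\in\M_0$), use eigenvalue continuity and Lemma~\ref{lemma_supportmeasures}~\eqref{item1support} to obtain $K_\mu\in\K^n_n$ with centre of mass at the origin, and then Lemma~\ref{spheres} together with the closeness of $S(K_\mu)$ and $\lambda_{min}$ to conclude $K_\mu\in\K^n(r_{K_0},R_{K_0})$ for small noise. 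Your explicit remarks on the uniformity of the threshold in $s_o$ only spell out what is implicit in the paper's argument.
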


\begin{proof}
If $\mu \in \M_0$ minimizes $D_{s_o}(\cdot, \epsilon_{s_o})$, then
\begin{align*}
\norm[{\HV[2](K_0) - \HV[2](\mu)}] &\leq \norm[\HV(K_0) + \epsilon_{s_o} - \HV(\mu)] + \norm[\epsilon_{s_o}]
\\
&\leq \sqrt{D_{s_o}(K_0, \epsilon_{s_o})} + \norm[\epsilon_{s_o}] = 2 \norm[\epsilon_{s_o}].
\end{align*}
The second order moments of $\mu$ depend linearly on $\HV[2](\mu)$, and the eigenvalues of the matrix of second order moments $M(\mu)$ of $\mu$ depend continuously on $M(\mu)$, see \cite[Prop. 6.2]{Serre2002}, so for each $\alpha > 0$,   
\begin{equation}\label{min_eigenvalue}
\abs[\lambda_{min}(M(S_{n-1}(K_0, \cdot))) - \lambda_{min}(M(\mu))] < \alpha
\end{equation}
if $\norm[\epsilon_{s_o}]$ is sufficiently small. Here $\lambda_{min}(A)$ denotes the smallest eigenvalue of a symmetric matrix $A$. Due to Lemma~\ref{lemma_supportmeasures} \eqref{item1support}, we have $\lambda_{min}(M(S_{n-1}(K_0, \cdot))) > 0$ as $K_0$ has nonempty interior, so $M(\mu)$ is positive definite if $\norm[\epsilon_{s_o}]$ is sufficiently small. Then $\mu$ is a surface area measure of a convex body $K_{\mu} \in \K^n_n$ by Lemma~\ref{lemma_supportmeasures}. Due to translation invariance of $K \mapsto S_{n-1}(K, \cdot)$, we can choose $K_{\mu}$ with centre of mass at the origin. Then by Lemma~\ref{spheres}, \eqref{min_eigenvalue} and the fact that 
$$\abs[S(K_0) -S(K_{\mu})] = \sqrt{\omega_n}\norm[{\HV[0](K_0) - \HV[0](\mu)}] \leq 2 \sqrt{\omega_n}\norm[\epsilon_{s_o}],$$
we even have that $r_{K_0}B^n \subseteq K_{\mu} \subseteq R_{K_0}B^n$ if $\norm[\epsilon_{s_o}] < c_{K_0}$, where $c_{K_0}> 0 $ is chosen sufficiently small. 
\end{proof}

We let $\Ks$ be the random set of convex bodies that minimize $D_{s_o}( \cdot, \epsilon_{s_o})$, i.e.
\begin{equation*}
\Ks=\big\{K \in \K^n \mid D_{s_o}(K, \epsilon_{s_o})=\inf_{L \in \K^n}D_{s_o}(L, \epsilon_{s_o}) \big\}.
\end{equation*}
By Equation~\eqref{infinf}, the set $\Ks$ is nonempty if and only if Algorithm Harmonic Intrinsic Volume LSQ has an output. Let $g \colon \K^n \times \R^{m_{s_o}} \to \R$ be given as $g(K,x)=\inf_{L \in \K^n} D_{s_o}(L, x) - D_{s_o}(K,x)$ for $K \in \K^n$ and $x \in \R^{m_{s_o}}$, then
\begin{equation*}
\bigg\{\Ks \neq \emptyset\bigg\}= \bigg\{\sup_{K \in \K^n} \1_{\{0\}}(g(K,\epsilon_{s_o})) = 1\bigg\} \subseteq \Omega,
\end{equation*}
and for $\alpha \in \R$, we have
\begin{align*}
&\bigg\{\sup_{K \in \Ks } \delta^t(K_0,K) \leq \alpha \bigg\} 
\\
=&\bigg\{\sup_{K \in \K^n} \delta^t(K_0,K)\1_{\{0\}}(g(K,\epsilon_{s_o})) \leq \alpha \bigg\} \cap \bigg\{\sup_{K \in \K^n} \1_{ \{0\}}(g(K,\epsilon_{s_o})) = 1\bigg\},
\end{align*}
where the supremum over the empty set is defined to be $\infty$. Using the notation of permissible sets, see \cite[App. C]{Pollard1984} and arguments as in \cite[p. 27]{Kousholt2016}, we obtain that $\sup_{K \in \K^n} \delta^t(K_0,K)\1_{\{0\}}(g(K,\epsilon_{s_o}))$ and $\sup_{K \in \K^n} \1_{ \{0\}}(g(K,\epsilon_{s_o}))$ are $\F$-$\B(\R)$-measurable. Then
\begin{equation*}
\bigg\{\sup_{K \in \Ks } \delta^t(K_0,K) \leq \alpha \bigg\} \in \F
\end{equation*}
for $\alpha \in \R$, which implies that $\sup_{K \in \Ks } \delta^t(K_0,K)$ is measurable.

\begin{theorem}\label{thm_cons_as}
Assume that $\sigma_{s_o}^2 = \mathcal{O}(s_o^{-(2n-1+\varepsilon)})$ for some $\varepsilon > 0$. Then
\begin{equation*}
\sup_{K \in \Ks} \delta^t (K_0,K) \to 0
\end{equation*}
almost surely for $s_o \to \infty$.
\end{theorem}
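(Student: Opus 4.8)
The plan is to combine the deterministic stability estimate of Theorem~\ref{Thm_noise} (together with the Prokhorov--Hausdorff chain used to pass from Corollary~\ref{Cor_bound_dudley} to Theorem~\ref{thm_stability}) with a Borel--Cantelli argument that controls the size of the noise. The key point is that, thanks to Lemma~\ref{Lemma_D_bound}, once $\norm[\epsilon_{s_o}]$ is small the reconstructions all lie in a ring $\K^n(r_{K_0},R_{K_0})$ with radii independent of $s_o$, so the stability constant does not degrade as $s_o\to\infty$; it then remains to show that the noise is eventually this small and, in fact, shrinks fast enough.

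\textbf{Deterministic step.} Fix $\varepsilon_0\in(0,1)$ and suppose $\norm[\epsilon_{s_o}]<c_{K_0}$. By Lemma~\ref{Lemma_D_bound} the set $\Ks$ is nonempty and, after suitable translations, $K_0$ and every $K\in\Ks$ lie in $\K^n(r_{K_0},R_{K_0})$. For $K\in\Ks$, minimality of $D_{s_o}(\cdot,\epsilon_{s_o})$ together with $D_{s_o}(K_0,\epsilon_{s_o})=\norm[\epsilon_{s_o}]^2$ gives $\norm[\HV(K_0)+\epsilon_{s_o}-\HV(K)]\leq\norm[\epsilon_{s_o}]$ and hence $\norm[\HV(K_0)-\HV(K)]\leq 2\norm[\epsilon_{s_o}]$. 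Applying Theorem~\ref{Thm_noise} with $L=K$, parameter $\varepsilon_0$, and $\delta=2\sqrt{\omega_n m_{s_o}}\,\norm[\epsilon_{s_o}]$ (and using translation invariance of $S_{n-1}(K,\cdot)$ and of $\HV$) yields
\begin{equation*}
\dudley{S_{n-1}(K_0,\cdot)}{S_{n-1}(K,\cdot)}\leq c(n,R_{K_0},\varepsilon_0)\,s_o^{\frac{\varepsilon_0-1}{2}}+2\sqrt{\omega_n m_{s_o}}\,\norm[\epsilon_{s_o}]=:\beta_{s_o},
\end{equation*}
where $\beta_{s_o}$ does not depend on the particular $K\in\Ks$. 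Feeding this bound into \cite[Thm. 8.5.3]{Schneider14} and \cite[Lemma 9.5]{Gardner2006} exactly as in the derivation of Theorem~\ref{thm_stability} (the bodies lying in $\K^n(r_{K_0},R_{K_0})$ with $r_{K_0},R_{K_0}$ independent of $s_o$), we obtain a constant $c(n,r_{K_0},R_{K_0})>0$ with $\sup_{K\in\Ks}\delta^t(K_0,K)\leq c(n,r_{K_0},R_{K_0})\,\beta_{s_o}^{\frac{1}{2n}}$.

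\textbf{Probabilistic step.} Since each $\epsilon_{s_oj}$ has zero mean and variance at most $\sigma_{s_o}^2$, we have $\E\norm[\epsilon_{s_o}]^2\leq m_{s_o}\sigma_{s_o}^2$, hence $\E\bigl[m_{s_o}\norm[\epsilon_{s_o}]^2\bigr]\leq m_{s_o}^2\sigma_{s_o}^2$. Because $m_{s_o}=\binom{s_o+n-2}{n-1}+\binom{s_o+n-1}{n-1}=\mathcal{O}(s_o^{n-1})$ and $\sigma_{s_o}^2=\mathcal{O}(s_o^{-(2n-1+\varepsilon)})$, this is $\mathcal{O}(s_o^{-1-\varepsilon})$, which is summable over $s_o\in\N$. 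By Markov's inequality $\sum_{s_o}\Prob\bigl(m_{s_o}\norm[\epsilon_{s_o}]^2>\eta\bigr)<\infty$ for every $\eta>0$, so the Borel--Cantelli lemma (applied along $\eta=1/k$, $k\in\N$) shows that $m_{s_o}\norm[\epsilon_{s_o}]^2\to 0$ almost surely; since $m_{s_o}\geq 1$, also $\norm[\epsilon_{s_o}]\to 0$ almost surely. On the resulting probability-one event, $\norm[\epsilon_{s_o}]<c_{K_0}$ for all large $s_o$ (so the deterministic step applies and $\Ks\neq\emptyset$), and $\beta_{s_o}=c(n,R_{K_0},\varepsilon_0)s_o^{\frac{\varepsilon_0-1}{2}}+2\sqrt{\omega_n}\sqrt{m_{s_o}}\,\norm[\epsilon_{s_o}]\to 0$ because $\varepsilon_0<1$ and $\sqrt{m_{s_o}}\,\norm[\epsilon_{s_o}]\to 0$. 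Therefore $\sup_{K\in\Ks}\delta^t(K_0,K)\leq c(n,r_{K_0},R_{K_0})\,\beta_{s_o}^{1/(2n)}\to 0$ almost surely, which is the claim (measurability of the supremum having been established before the statement).

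The main obstacle is keeping the stability constant independent of $s_o$: a naive application of Theorem~\ref{thm_stability} produces a constant depending on the inradius and circumradius of the reconstructions, which a priori vary with $s_o$. Lemma~\ref{Lemma_D_bound} is precisely what removes this dependence, by confining all minimizers to the fixed ring $\K^n(r_{K_0},R_{K_0})$ once the noise is below $c_{K_0}$. The second, more quantitative, point is that the hypothesis $\sigma_{s_o}^2=\mathcal{O}(s_o^{-(2n-1+\varepsilon)})$ is exactly calibrated so that $m_{s_o}^2\sigma_{s_o}^2$ is summable given the polynomial growth $m_{s_o}=\Theta(s_o^{n-1})$; this is what makes the Borel--Cantelli argument go through and, simultaneously, forces the additive noise term $\sqrt{m_{s_o}}\,\norm[\epsilon_{s_o}]$ in $\beta_{s_o}$ to vanish.
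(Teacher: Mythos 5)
Your proof is correct and follows essentially the same route as the paper: almost-sure control of $m_{s_o}\norm[\epsilon_{s_o}]^2$ from the summability of $m_{s_o}^2\sigma_{s_o}^2$, Lemma~\ref{Lemma_D_bound} (with Lemma~\ref{lemma_infD}) to confine all minimizers to the fixed ring $\K^n(r_{K_0},R_{K_0})$, Theorem~\ref{Thm_noise} with $\delta=2\sqrt{\omega_n m_{s_o}}\,\norm[\epsilon_{s_o}]$, and the Dudley--Prokhorov--Hausdorff chain behind Theorem~\ref{thm_stability}. The only cosmetic differences are that you run the noise control through Markov plus Borel--Cantelli where the paper sums expectations directly, and that you make the final bound $\beta_{s_o}^{1/(2n)}$ explicit rather than arguing qualitatively.
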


\begin{proof} 
It follows from the assumption on $\sigma_{s_o}^2$ that $m_{s_o}\norm[\epsilon_{s_o}]^2 \to 0$ almost surely for $s_o \to \infty$ as
\begin{equation*}
\sum_{s_o=2}^{\infty} \mathbb{E} m_{s_o}\norm[\epsilon_{s_o}]^2
= \sum_{s_o=2}^{\infty} m_{s_o} \sum_{j=1}^{m_{s_o}} \mathbb{E}\epsilon_{s_oj}^2
\leq \sum_{s_o=2}^{\infty} m_{s_o}^{2} \sigma_{s_o}^2 < \infty,
\end{equation*}
where we have used that $m_{s_o} = \mathcal{O}(s_o^{n-1})$ to obtain the last inequality. Now choose $c_{K_0}$ according to Lemma~\ref{Lemma_D_bound} and let $\omega \in \Omega$ satisfy that $m_{s_o}\norm[\epsilon_{s_o}(\omega)]^2 \to 0$ for $s_o \to \infty$.  Then, there exists an $S \in \N $ such that $\sqrt{m_{s_o}} \norm[\epsilon_{s_o}(\omega)] < c_{K_0}$ for $s_o > S$.  In particular, $\norm[\epsilon_{s_o}(\omega)] < c_{K_0}$ for $s_o > S$, so by Lemma~\ref{lemma_infD} and Lemma~\ref{Lemma_D_bound} there is an output polytope of Algorithm Harmonic Intrinsic Volume LSQ. Then, for $s_o > S$, the set $\mathbb{K}_{s_o}(\epsilon_{s_o}(\omega))$ is nonempty, and $K + x_K \in \K^n(r_{K_0}, R_{K_0})$ for $K \in \mathbb{K}_{s_o}(\epsilon_{s_o}(\omega))$ and a suitable $x_K \in \R^n$. Since 
\begin{align*}
\norm[\HV(K_0) - \HV(K)]
&\leq 
\norm[\HV(K_0) + \epsilon_{s_o}(\omega)- \HV(K)] + \norm[\epsilon_{s_o}(\omega)]
\\
&\leq
\sqrt{D_{s_o}(K_0, \epsilon_{s_o}(\omega))} + \norm[\epsilon_{s_o}(\omega)] =  2\norm[\epsilon_{s_o}(\omega)]
\end{align*}
for $K \in \mathbb{K}_{s_o}(\epsilon_{s_o}(\omega))$, the translation invariance of $K \mapsto S_{n-1}(K, \cdot)$ and Theorem~\ref{Thm_noise} yield that
\begin{align*}
&\sup_{K \in \mathbb{K}_{s_o}(\epsilon_{s_o}(\omega))} d_{D}(S_{n-1}(K_0, \cdot), S_{n-1}(K, \cdot))
\\
&\leq c(n,R_{K_0}, \frac{1}{3})s_o^{-\frac{1}{3}} + 2 \sqrt{\omega_n m_{s_o}} \norm[\epsilon_{s_o}(\omega)]
\to 0
\end{align*}
for $s_o \to \infty$. Hence, \cite[Lemma 9.5]{Gardner2006} and \cite[Thm. 8.5.3]{Schneider14} imply that
\begin{equation*}
\sup_{K \in \mathbb{K}_{s_o}(\epsilon_{s_o}(\omega))} \delta^t
 (K_0,K) \to 0
\end{equation*}
for $s_o \to \infty$.
\end{proof}

\begin{theorem}\label{thm_cons_prob}
Assume that $\sigma_{s_o}^2 = \mathcal{O}(s_o^{-(2n-2+\varepsilon)})$ for some $\varepsilon > 0$. Then 
\begin{equation*}
\sup_{K \in \mathbb{K}_{s_o}(\epsilon_{s_o})} \delta^t (K_0, K) \to 0
\end{equation*}
in probability for $s_o \to \infty$.
\end{theorem}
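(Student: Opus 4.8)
The plan is to reproduce the proof of Theorem~\ref{thm_cons_as} step by step, replacing its Borel--Cantelli step --- which requires summability of $\E m_{s_o}\norm[\epsilon_{s_o}]^2$ --- by a single application of Markov's inequality, for which $\E m_{s_o}\norm[\epsilon_{s_o}]^2\to 0$ already suffices; this is exactly the slack that the weaker variance hypothesis provides. Put $Z_{s_o}=\sqrt{m_{s_o}}\,\norm[\epsilon_{s_o}]$. Since the noise variables are independent with zero mean and variance at most $\sigma_{s_o}^2$, and $m_{s_o}=\mathcal{O}(s_o^{n-1})$,
\begin{equation*}
\E Z_{s_o}^2 = m_{s_o}\sum_{j=1}^{m_{s_o}}\E\epsilon_{s_oj}^2 \le m_{s_o}^2\sigma_{s_o}^2 = \mathcal{O}(s_o^{-\varepsilon}),
\end{equation*}
so $Z_{s_o}\to 0$ in probability.

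Next I would localize on a good event. Let $c_{K_0}>0$ be the constant of Lemma~\ref{Lemma_D_bound}, and for $\kappa\in(0,c_{K_0}]$ to be fixed later set $G_{s_o}=\{Z_{s_o}<\kappa\}$. On $G_{s_o}$ one has $\norm[\epsilon_{s_o}]\le Z_{s_o}<c_{K_0}$ (since $m_{s_o}\ge 1$), so Lemma~\ref{lemma_infD} and Lemma~\ref{Lemma_D_bound} give $\Ks\neq\emptyset$ and that every $K\in\Ks$ can be translated so that $K$ and a fixed translate of $K_0$ both lie in $\K^n(r_{K_0},R_{K_0})$. Exactly as in the proof of Theorem~\ref{thm_cons_as}, every $K\in\Ks$ satisfies $\norm[\HV(K_0)-\HV(K)]\le 2\norm[\epsilon_{s_o}]$, so Theorem~\ref{Thm_noise} (with $R=R_{K_0}$, its parameter taken equal to $\tfrac13$, and $\delta=2\sqrt{\omega_n m_{s_o}}\,\norm[\epsilon_{s_o}]$) together with the translation invariance of $K\mapsto S_{n-1}(K,\cdot)$ yields, on $G_{s_o}$,
\begin{equation*}
\sup_{K\in\Ks}\dudley{S_{n-1}(K_0,\cdot)}{S_{n-1}(K,\cdot)} \le c(n,R_{K_0},\tfrac13)\,s_o^{-1/3}+2\sqrt{\omega_n}\,Z_{s_o}.
\end{equation*}
Passing this through \cite[Lemma 9.5]{Gardner2006} and \cite[Thm. 8.5.3]{Schneider14}, and using that the bodies involved lie in $\K^n(r_{K_0},R_{K_0})$, gives a bound of the form
\begin{equation*}
\sup_{K\in\Ks}\delta^t(K_0,K) \le c(n,r_{K_0},R_{K_0})\bigl(c(n,R_{K_0},\tfrac13)\,s_o^{-1/3}+2\sqrt{\omega_n}\,Z_{s_o}\bigr)^{1/(2n)}
\end{equation*}
on $G_{s_o}$, with an outer constant not depending on $s_o$.

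Finally I would assemble the estimate. Fix $\eta>0$; choose $\rho>0$ so small that $c(n,r_{K_0},R_{K_0})\,t^{1/(2n)}<\eta$ for $0\le t<\rho$, an index $S$ with $c(n,R_{K_0},\tfrac13)\,s_o^{-1/3}<\rho/2$ for all $s_o>S$, and $\kappa=\min\{c_{K_0},\,\rho/(4\sqrt{\omega_n})\}$. Then for $s_o>S$ the right-hand side above is strictly below $\eta$ on $G_{s_o}$, and $\Ks\neq\emptyset$ there, so $\{\sup_{K\in\Ks}\delta^t(K_0,K)>\eta\}\subseteq G_{s_o}^{c}=\{Z_{s_o}\ge\kappa\}$ (the convention $\sup\emptyset=\infty$ does no harm because the empty case is excluded on $G_{s_o}$). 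The event on the left is measurable by the discussion preceding Theorem~\ref{thm_cons_as}, so Markov's inequality gives
\begin{equation*}
\Prob\Bigl(\sup_{K\in\Ks}\delta^t(K_0,K)>\eta\Bigr) \le \Prob(Z_{s_o}\ge\kappa) \le \kappa^{-2}\,\E Z_{s_o}^2 \le \kappa^{-2}m_{s_o}^2\sigma_{s_o}^2 = \mathcal{O}(s_o^{-\varepsilon}),
\end{equation*}
which tends to $0$ as $s_o\to\infty$. Since $\eta>0$ was arbitrary, this is the asserted convergence in probability.

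I do not anticipate a genuine obstacle: the statement is the ``in probability'' twin of Theorem~\ref{thm_cons_as}, and the one substantive point is that convergence in probability of $m_{s_o}\norm[\epsilon_{s_o}]^2$ needs only $m_{s_o}^2\sigma_{s_o}^2\to 0$, whereas the almost sure statement needed $\sum_{s_o}m_{s_o}^2\sigma_{s_o}^2<\infty$ --- precisely one power of $s_o$ of slack in the variance decay. The points needing a little care --- that the constant in the translative Hausdorff bound is independent of $s_o$ (this is where the uniform containment $K,K_0\in\K^n(r_{K_0},R_{K_0})$ furnished by Lemma~\ref{Lemma_D_bound} is essential), and the bookkeeping between $\Ks=\emptyset$ and ``$\Ks$ nonempty with small diameter'' --- are already handled in the proof of Theorem~\ref{thm_cons_as} and carry over once one works on $G_{s_o}$.
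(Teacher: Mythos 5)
Your proposal is correct and follows essentially the same route as the paper: the paper's proof simply notes that Markov's inequality and $m_{s_o}^2\sigma_{s_o}^2=\mathcal{O}(s_o^{-\varepsilon})$ give $m_{s_o}\norm[\epsilon_{s_o}]^2\to 0$ in probability, and then the argument of Theorem~\ref{thm_cons_as} (Lemmas~\ref{lemma_infD} and \ref{Lemma_D_bound}, Theorem~\ref{Thm_noise}, and the Dudley--Prokhorov--Hausdorff conversion with constants uniform in $s_o$) is repeated. Your good-event formulation with $G_{s_o}=\{Z_{s_o}<\kappa\}$ is just an explicit, quantitative way of carrying out that ``same way'' step, and all estimates check out.
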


Markov's inequality and the assumption that $\sigma_{s_o}^2 = \mathcal{O}(s_o^{-(2n-2+\varepsilon)})$ imply that $ m_{s_o}\norm[\epsilon_{s_o}]^2 \to 0$ in probability for $s_o \to \infty$. Then, Theorem~\ref{thm_cons_prob} follows in the same way as Theorem~\ref{thm_cons_as}.

%
%\fxnote{Denne bemærkning skal fjernes, hvis jeg ikke finder et ordentlig bevis for påstanden}
%\begin{remark}
%
%Assume that the noise variable are independent and normally distributed, $\epsilon_{s_o j} \sim N(0, \sigma_{s_o}^2)$ for $s_o \in \N$ and $j=1, \dots, m_{s_o}$. Then, $\norm[\epsilon_{s_o}]^2 \sim \sigma_{s_o}^2 \chi^2(m_{s_o})$ for $s_o \in \N$, and the characteristic function of $m_{s_o}\norm[\epsilon_{s_o}]^2$ is
%\begin{align*}
%\varphi_{m_{s_o}\norm[\epsilon_{s_o}]^2}(t)&=\E \exp(i \, m_{s_o}\norm[\epsilon_{s_o}]^2 t) = (1-2 i \, m_{s_o} \sigma_{s_o}^2 t)^{-\frac{m_{s_o}}{2}} 
%\\
%&=\bigg(1-\frac{i m_{s_o}^2\sigma_{s_o}^2 t}{\frac{m_{s_o}}{2}}\bigg)^{\frac{-m_{s_o}}{2}}
%\end{align*}
%for $t \in \R$. If $m_{s_o}\norm[\epsilon_{s_o}]^2 \to 0$ in probability for $s_o \to \infty$, then $\varphi_{m_{s_o}\norm[\epsilon_{s_o}]^2}(t) \to 1$ for $t \in \R$. This implies that $m_{s_o}^2 \sigma_{s_o}^2 \to 0$ for $s_o \to \infty$, so $\sigma_{s_o}^2 = \mathcal{O}(m_{s_o}^{-2-\varepsilon})=\mathcal{O}(s_{o}^{-(2n-2+ 2\varepsilon)})$ for some $\varepsilon > 0$.  
%\end{remark}

Theorems~\ref{thm_cons_as} and \ref{thm_cons_prob} yield that the reconstruction algorithm gives good approximations to the shape of $K_0$ for large $s_o$ under certain assumptions on the variance of the noise variables. To test how noise affects the reconstructions for small $s_o$, the ellipsoid in Figure~\ref{Ellipsoid2} is reconstructed from harmonic intrinsic volumes up to degree $6$. For $k \in \N_0$, the dimension of $\Haus^3_k$ is $2k+1$, and to derive the harmonic intrinsic volumes, we use the orthonormal basis of $\Haus_k^3$ given by
\begin{equation*}
H_{3k(2j+1)}(u(\theta, \phi))=\alpha_{kj}\sin^j(\theta)C_{k-j}^{j+\frac{1}{2}}(\cos(\theta))\cos(j\phi) , \qquad 0 \leq j \leq k
\end{equation*}
and
\begin{equation*}
H_{3k(2j)}(u(\theta, \phi))=\alpha_{kj}\sin^j(\theta)C_{k-j}^{j+\frac{1}{2}}(\cos(\theta))\sin(j\phi), \qquad 1 \leq j \leq k, 
\end{equation*}
where $\alpha_{kj} \in \R$ is a normalizing constant, $C_l^\lambda, l \in \N_0, \lambda > 0$ are Gegenbauer polynomials and $u(\theta, \phi)=(\sin(\theta)\sin(\phi), \sin(\theta)\cos(\phi), \cos(\theta))$ for $0 \leq \theta \leq \pi$ and $0 \leq \phi \leq 2\pi$, see \cite[Sections 1.2 and 1.6.2]{Dai2013}. 

The harmonic intrinsic volumes are subject to an increasing level of noise. The first plot in Figure~\ref{NoiseRecon} is a reconstruction based on exact harmonic intrinsic volumes, whereas the reconstructions in the second and third plot are based on harmonic intrinsic volumes disrupted by noise. The variance of the noise variables is $\sigma_2^2=1$ in the second plot and $\sigma_3^2=4$ in the third plot. Then the standard deviations $\sigma_2$ and $\sigma_3$ of the noise variables are approximately $5\%$ and $10\%$ of $\psi_{201}(K_0)$, respectively. For the three levels of noise, the minimization problem~\eqref{mini_noise} is solved using the \emph{fmincon} procedure provided by MatLab and Algorithm MinkData is applied to reconstruct a polytope corresponding to the solution.

\begin{figure}
\centering
\includegraphics[width=4.8cm]{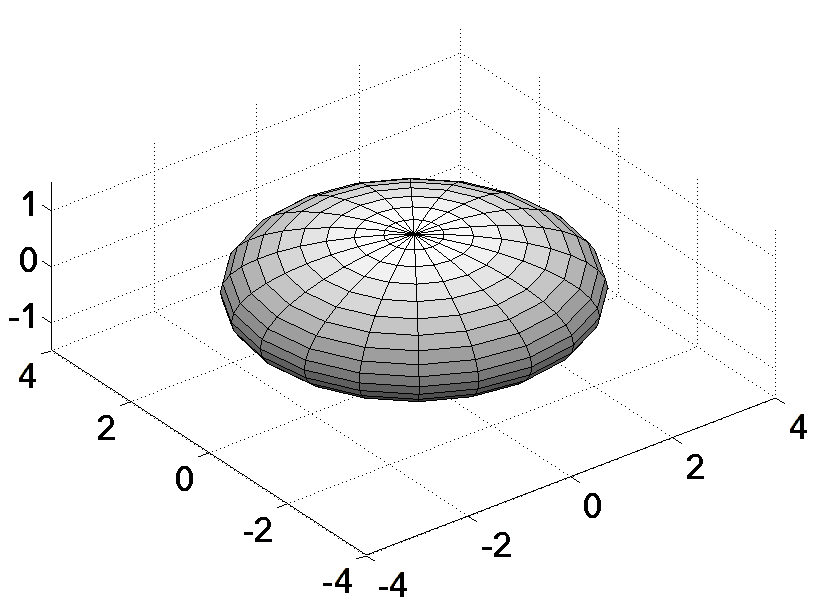}
\caption{Ellipsoid}
\label{Ellipsoid2}
\end{figure}

\begin{figure}
\centering
\includegraphics[width=4.8cm]{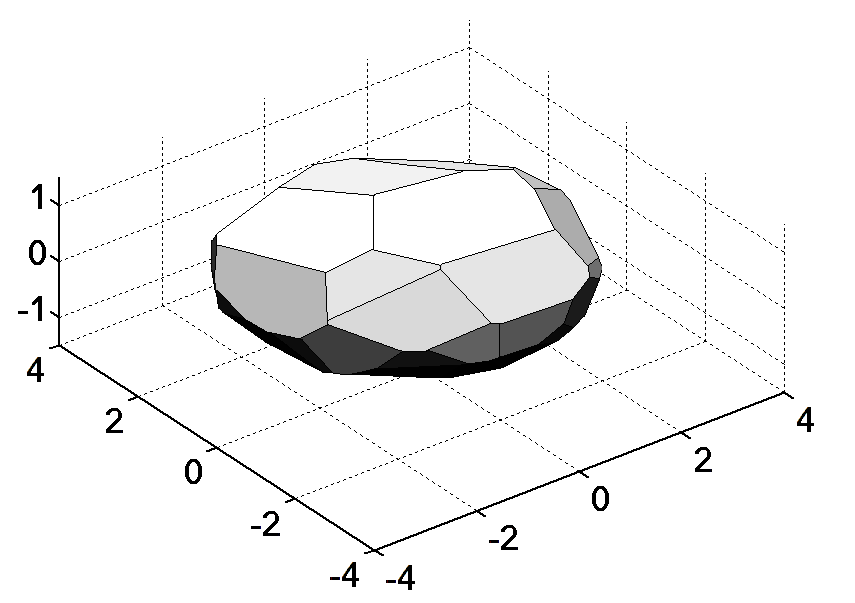}
\hspace*{0.7cm}
\includegraphics[width=4.8cm]{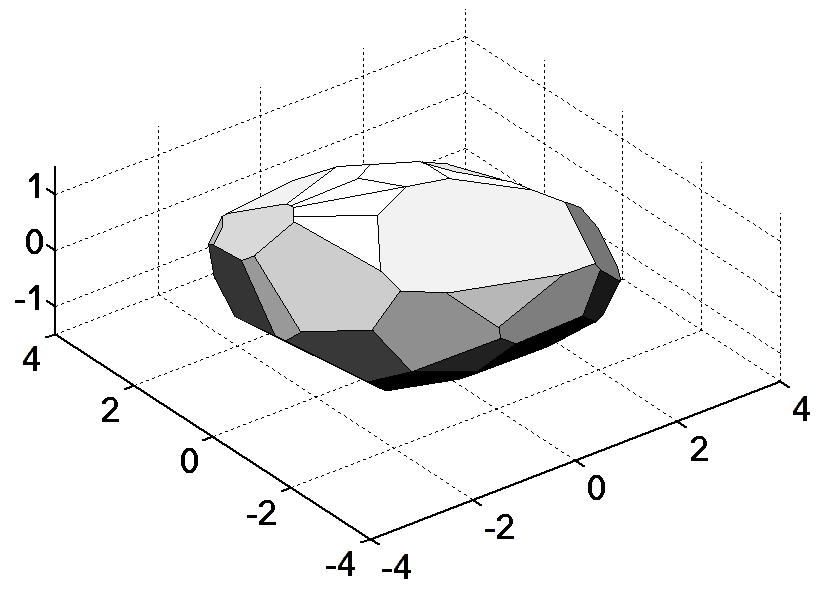}
\hspace*{0.7cm}
\includegraphics[width=4.8cm]{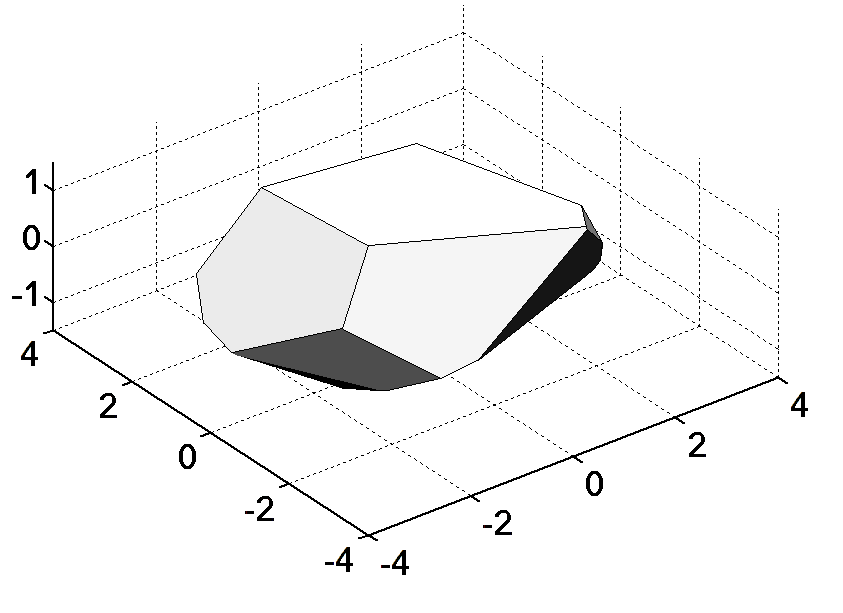}
\caption{Reconstructions of the ellipsoid in Figure~\ref{Ellipsoid2} based on noisy measurements of harmonic intrinsic volumes up to degree $s_o=6$. In the three plots, the variances of the noise variables are $0, 1$ and $4$.}
\label{NoiseRecon}
\end{figure}

The three plots in Figure~\ref{NoiseRecon} show how the reconstructions deviate increasingly from the ellipsoid
as the variance of the noise variables increases. The reconstruction based on exact harmonic intrinsic volumes captures essential features of the ellipsoid. The reconstruction is approximately invariant under rotations around the third axis and has the same main directions and semi axes lengths as the ellipsoid. Despite a noise level corresponding to $5\%$ of $\psi_{201}(K_0)$, the reconstruction in the second plot captures to some extent the same features and provides a fairly good approximation of the ellipsoid. The reconstruction in the third plot is comparable to the ellipsoid. However, the effect of noise is clearly visible.

\section*{Acknowledgements}
I thank Richard Gardner and Peyman Milanfar for making their implementation of the algorithm MinkData available to me. I am very grateful to Markus Kiderlen for his many helpful ideas and suggestions and for his comments on a first version of this paper. This research was supported by Centre for Stochastic Geometry and Advanced Bioimaging, funded by a grant from the Villum foundation.

\bibliography{arXiv}

\begin{thebibliography}{10}

\bibitem{Atkinson2012}
K.~Atkinson and W.~Han.
\newblock {\em Spherical Harmonics and Approximations on the Unit Sphere: An
  Introduction}.
\newblock Lecture Notes in Mathematics. Springer, Berlin, 2012.

\bibitem{Bauer2001}
H.~Bauer.
\newblock {\em Measure and integration theory}.
\newblock De Gruyter, Berlin, 2001.

\bibitem{Beisbart2002}
C.~Beisbart, R.~Dahlke, K.~Mecke, and H.~Wagner.
\newblock Vector- and tensor-valued descriptors for spatial patterns.
\newblock In K.~Mecke and D.~Stoyan, editors, {\em Morphology of Condensed
  Matter}. Springer, Heidelberg, 2002.

\bibitem{Dai2013}
F.~Dai and Y.~Xu.
\newblock {\em Approximation Theory and Harmonic Analysis on Spheres and
  Balls}.
\newblock Springer, New York, 2013.

\bibitem{Dudley2002}
R.~M. Dudley.
\newblock {\em Real Analysis and Probability}.
\newblock Cambridge University Press, Cambridge, 2002.

\bibitem{Gardner06}
R.~J. Gardner.
\newblock {\em Geometric Tomography}.
\newblock Cambridge University Press, New York, second edition, 2006.

\bibitem{Gardner2006}
R.~J. Gardner, M.~Kiderlen, and P.~Milanfar.
\newblock Convergence of algorithms for reconstructing convex bodies and
  directional measures.
\newblock {\em Ann. Stat.}, 34(3):1331--1374, 2006.

\bibitem{Groemer1996}
H.~Groemer.
\newblock {\em Geometric applications of Fourier series and spherical
  harmonics}.
\newblock Cambridge University Press, Cambridge, 1996.

\bibitem{Kousholt2016}
A.~Kousholt and M.~Kiderlen.
\newblock Reconstruction of convex bodies from surface tensors.
\newblock {\em Adv. Appl. Math.}, 76:1--33, 2016.

\bibitem{Lemordant1993}
J.~Lemordant, P.~D. Tao, and H.~Zouaki.
\newblock Mod\'{e}lisation et optimisation num\'{e}rique pour la reconstruction
  d'un poly\`{e}dre \`{a} partir de son image gaussienne
  g\'{e}n\'{e}ralis\'{e}e.
\newblock {\em RAIRO, Mod\'{e}lisation Math. Anal. Num\'{e}r.}, 27:349--74,
  1993.

\bibitem{Pollard1984}
D.~Pollard.
\newblock {\em Convergence of stochastic processes}.
\newblock Springer-Verlag, New York, 1984.

\bibitem{Schneider14}
R.~Schneider.
\newblock {\em Convex Bodies: The Brunn-Minkowski Theory}.
\newblock Cambridge University Press, Cambridge, second edition, 2014.

\bibitem{SM10}
G.~E. Schr{\"o}der-Turk, S.~Kapfer, B.~Breidenbach, C.~Beisbart, and K.~Mecke.
\newblock Tensorial {M}inkowski functionals and anisotropy measures for planar
  patterns.
\newblock {\em J. Microsc.}, 238(1):57--74, 2010.

\bibitem{Schroder-Turk2013}
G.~E. Schr{\"o}der-Turk, W.~Mickel, S.~C. Kapfer, F.~M. Schaller,
  B.~Breidenbach, D.~Hug, and K.~Mecke.
\newblock Minkowski tensors of anisotropic spatial structure.
\newblock {\em New J. Phys.}, 15:083028, 2013.

\bibitem{Serre2002}
Denis Serre.
\newblock {\em Matrices: Theory and Applications}.
\newblock Springer-Verlag, New York, 2002.

\end{thebibliography}

\end{document}